\numberwithin{equation}{section}
\numberwithin{equation}{section}
\newcommand{\rr}{\mathbb{R}}
\newcommand{\lan}{\langle}
\newcommand{\ran}{\rangle}
\newcommand{\be}{\begin{eqnarray*}}
\newcommand{\bel}{\begin{eqnarray}}
\newcommand{\ee}{\end{eqnarray*}}
\newcommand{\eel}{\end{eqnarray}}
\newcommand{\ba}{\begin{aligned}}
\newcommand{\ea}{\end{aligned}}
\newcommand{\de}{\Delta}
\newcommand{\al}{\alpha}
\newcommand{\na}{\nabla}
\newcommand{\ep}{\epsilon}
\newcommand{\nq}{{\neq}}
\newcommand{\myr}[1]{{ #1 }}
\newcommand{\pa}{\partial}
\newcommand{\wh}{\widehat}
\newcommand{\lf}{\left}
\newcommand{\rg}{\right}
\newtheorem{thm}{Theorem}
\newtheorem{lem}{Lemma}
\newtheorem{rmk}{Remark}
\numberwithin{thm}{section}
\numberwithin{rmk}{section}
\numberwithin{lem}{section}
\newcommand\R{{\mathbb R}}
\newcommand\T{{\mathbb T}}
\newcommand\Torus{{\mathbb T}}
\newcommand{\cF}{\mathcal{F}}
\newcommand{\cG}{\mathcal{G}}
\newcommand{\n}{\ensuremath{\nonumber}}
\title{A Note on Enhanced Dissipation and Taylor Dispersion of Time-dependent Shear Flows}
\author{Daniel Coble\thanks{\footnotesize  University of South Carolina, Columbia, SC 29208, USA  \href{mailto:dncoble@email.sc.edu}{\texttt{dncoble@email.sc.edu}}} \and Siming He \thanks{Department of Mathematics, University of South Carolina, Columbia, SC 29208, USA \href{mailto:siming@mailbox.sc.edu}{\texttt{siming@mailbox.sc.edu }}\\
\textbf{Acknowledgment.} This work was  partly  supported by NSF grants DMS 2006660, DMS 2304392, DMS 2006372.} }
\begin{document}
    \maketitle 
\begin{abstract}
This paper explores the phenomena of enhanced dissipation and Taylor dispersion in solutions to the passive scalar equations subject to time-dependent shear flows. The hypocoercivity functionals with carefully tuned time weights are applied in the analysis. We observe that as long as the critical points of the shear flow vary slowly, one can derive the sharp enhanced dissipation and Taylor dispersion estimates, mirroring the ones obtained for the time-stationary case.
\end{abstract}
\tableofcontents

    \section{Introduction}\label{introduction}
    In this paper, we consider the passive scalar equations
    \begin{align}\label{PS}
    \pa_t f+V(t,y)\pa_x f=\nu \de_\sigma f,\quad f(t=0,x,y)=f_0(x,y). 
    \end{align}
    Here $f$ denotes the density of the substances, and $(V(t,y),0)$ is a time-dependent shear flow. The P\'eclet number $\nu>0$ captures the ratio between the transport and diffusion effects in the process. Here $\de_\sigma=\sigma\pa_{x}^2+\pa_y^2,\, \sigma\in\{0,1\}.$ We consider three types of domains: $\Torus\times \rr,\, \Torus^2, \, \rr\times [-1,1]$. The torus $\Torus$ is normalized such that $ \Torus =[-\pi,\pi].$
    
    In recent years, much research has been devoted to studying enhanced dissipation and Taylor dispersion phenomena associated with the equation \eqref{PS} in the regime $0<\nu\ll1.$ To understand these phenomena, we first identify the relevant time scale of the problem. The standard $L^2$-energy estimate yields the following energy dissipation equality: 
    \begin{align}
    \frac{d}{dt}\|f\|_{L^2}^2=-2\nu \sigma\|\pa_x f\|_{L^2}^2-2\nu\|\pa_y f\|_{L^2}^2.\label{engy_rel}
    \end{align}
    Hence, at least formally, we expect that the energy ($L^2$-norm) of the solution decays to half of the original value on a long time scale $\mathcal{O}(\nu^{-1}).$ This is called the ``heat dissipation time scale''. However, a natural question remains: since the fluid transportation can create gradient growth of the density $\na f$, which makes the damping effect in \eqref{engy_rel} stronger, can one derive a better decay estimate of the solution to \eqref{PS}? This question was answered by Lord Kelvin in 1887 for a special family of flow $V(t,y)=y$ (Couette flow) \cite{Kelvin87}. He could explicitly solve the equation \eqref{PS} and read the exact decay rate through the Fourier transform. To present his observation, we first restrict ourselves to the cylinder $\Torus\times \rr$ or torus $\Torus^2$ and define the concepts of horizontal average and remainder:
    \begin{align}\label{x_av_rm}
    \lan f\ran(y) =\frac{1}{2\pi}\int_{-\pi}^{\pi} f(x,y)dx,\quad f_\nq(x,y)=f(x,y)-\lan f\ran(y).
\end{align}
We observe that the $x$-average $\lan f\ran$ of the solution to \eqref{PS} is also a solution to the heat equation. Hence it decays with rate $\nu$. On the other hand, the remainder $f_\nq$ still solves the passive scalar equation \eqref{PS} with $f_\nq(t=0,x,y)=f_{0;\nq}(x,y)$ and something nontrivial can be said. Lord Kelvin showed that there exists constants $C,\,\delta>0$ such that the following estimate holds 
\begin{align}\label{ed_mon_sharp}
\|f_{\nq}(t)\|_{L^2}\leq C\|f_{0;\nq}\|_{L^2}e^{-\delta\nu^{1/3}t},\quad \forall t\geq 0. 
\end{align} 
One can see that significant decay of the remainder happens on time scale $\mathcal{O}(\nu^{-1/3})$, which is much shorter than the heat dissipation time scale. This phenomenon is called the \emph{enhanced dissipation}. 

However, new challenges arise when one considers shear flows different from the Couette flow. In these cases, no direct Fourier analytic proof is available at this point. We focus on two families of shear flows, i.e., strictly monotone shear flows and nondegenerate shear flows. In the paper \cite{BCZ15}, J. Bedrossian and M. Coti Zelati apply hypocoercivity techniques to show that for \emph{stationary} strictly monotone shear flows $\{(V(y),0)|\inf|V'(y)|\geq c>0,\, y\in \rr\}$, the following estimate is available
\begin{align*}
\|f_\nq(t)\|_{L^2}\leq C \|f_{\nq;0}\|_{L
^2}e^{-\delta\nu^{1/3}|\log\nu|^{-2}t},\quad \forall t\geq 0.
\end{align*}
Later on, D. Wei applied resolvent estimate techniques to improve their estimate to \eqref{ed_mon_sharp} \cite{Wei18}. 

When we consider non-constant smooth shear flows on the torus $\Torus^2$, an important geometrical constraint has to be respected, namely, the shear profile $V$ must have critical points $\mathcal{C}:=\{y_\ast|\pa_y V(y_\ast)=0\}$. Nondegenerate shear flows are a family of shear flows such that the second derivative of the shear profile does not vanish at these critical points, i.e., $\min_{y_\ast\in \mathcal{C}}|\pa_{y}^2V(y_\ast)|\geq c>0$. In the papers \cites{BCZ15, Wei18}, it is shown that if the underlying shear flows are  \emph{stationary} and non-degenerate, there exist constants $C\geq 1,\ \delta>0$ such that
\begin{align}\label{ED_non_deg}
\|f_{\nq}(t)\|_{L^2}\leq C\|f_{0;\nq}\|_{L^2}e^{-\delta\nu^{1/2}t},\quad \forall t\in[0,\infty).
\end{align}
    In the paper \cite{CotiZelatiDrivas19}, it is shown that the enhanced dissipation estimates \eqref{ed_mon_sharp}, \eqref{ED_non_deg} are sharp for \emph{stationary} shear flows. In the paper \cites{CKRZ08, ElgindiCotiZelatiDelgadino18,FengIyer19}, the authors rigorously justify the relation between the enhanced dissipation effect and the mixing effect. In the paper \cite{AlbrittonBeekieNovack21}, the authors apply H\"ormander hypoellipticity technique to derive the estimates \eqref{ed_mon_sharp}, \eqref{ED_non_deg} on various domains. Further enhanced dissipation in other flow settings, we refer the interested readers to the papers \cites{He21,FengFengIyerThiffeault20,CotiZelatiDolce20},
and the references therein. The enhanced dissipation effects have also found applications in many different areas, ranging from hydrodynamic stability to plasma physics, we refer to the following papers \cite{BMV14,BGM15I,BGM15II,BGM15III,ChenLiWeiZhang18,BedrossianHe16,BedrossianHe19,GongHeKiselev21,HeKiselev21,BedrossianBlumenthalPunshonSmith192, WeiZhang19,LiZhao21,CotiZelatiDietertGerardVaret22,AlbrittonOhm22,Bedrossian17, He,HuKiselevYao23,HuKiselev23,KiselevXu15,IyerXuZlatos,CotiZelatiDolceFengMazzucato,FengShiWang22}.     

The enhanced dissipation can be rigorously justified for the periodic domains $\Torus^2,\ \Torus\times \rr$. Based on these observations, one might ask whether extending these results to infinitely long channels, e.g., $\rr\times[-1,1]$ is possible. The question is highly nontrivial. As mentioned above, the enhanced dissipation phenomenon is closely related to the mixing effect associated with the fluid field, \cite{ElgindiCotiZelatiDelgadino18,FengIyer19}. However, it is widely recognized that the mixing effect can be weak in infinite channels. As it turns out, in the infinite channel, another fluid transport effect -   \emph{Taylor dispersion} - plays an important role, see, e.g., \cite{Aris56,Taylor53,YoungJones91}. For a mathematically rigorous justification, we refer to the papers, \cite{CotiZelatiGallay21,BedrossianCotiZelatiDolce22,BeckChaudharyWayne20,CotiZelatiDolceLo23
}. It is also worth mentioning that the
Taylor dispersion is also related to the \emph{quenching phenomenon} of shear flows, see, e.g., \cite{CKR00,KiselevZlatos,HeTadmorZlatos,Zlatos11 , Zlatos2010}.

Most of the results we present thus far are centered around \emph{stationary} flows. In this paper, we focus on \emph{time-dependent} shear flows and hope to identify sufficient conditions that guarantee enhanced dissipation and Taylor dispersion. Before stating the main theorems, we provide some further definitions. After applying a Fourier transformation in the $x$-variable, we end up with the following $k$-by-$k$ equation
\begin{align}\label{k_by_k_eq}
\pa_t \wh f_k(t,y)+V(t,y)ik \wh f_k(t,y)=\nu\pa_{y}^2 \wh f_k-\sigma\nu|k|^2 \wh f_k(t,y),\quad \wh f_k(t=0,y)= \wh f_{0;k}(y). 
\end{align}
We will drop the $\wh{(\cdot)}$ notation later for simplicity.  
The main statements of our theorems are as follows:

\begin{thm}\label{thm_mon}
Consider the solution to the equation \eqref{k_by_k_eq} initiated from the initial data $f_0\in C_c^\infty(\Torus\times\rr)$. Assume that on the time interval $[0,T]$, the $C_t C^2_{y}$ velocity profile $V(t,y)$ satisfies the following constraint   
\begin{align}
\inf_{t\in[0,T],\ y\in\rr}|\pa_y V(t,y)|\geq \mathfrak{c}>0,\quad \|V\|_{L_t^\infty([0,T]; W_y^{3,\infty})}<C.\label{V_y}
\end{align}
Then there exists a threshold $\nu_0(V)$  such that for $\nu<\nu_0$, the following estimate holds
\begin{align}\label{mon_ED_main}
\|f_{k}(t)\|_{L^2}\leq e\| f_{0;k}\|_{L^2}\exp\lf\{-\delta\nu^{1/3}\myr{|k|^{2/3}} t\rg\},\quad \forall t\in [0,T].
\end{align}
Here $\delta>0$ are constants depending only on the parameter $\mathfrak{c}$ and $\|V\|_{L_t^\infty C_y^3}$ \eqref{delta_mono}.
\end{thm}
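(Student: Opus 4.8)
\medskip\noindent\textbf{Proof strategy.} The plan is to run a hypocoercivity estimate with ramping time weights, in the spirit of \cite{BCZ15}, directly on the scalar equation \eqref{k_by_k_eq}; write $g=f_k$. For $k=0$ the equation is the heat equation and \eqref{mon_ED_main} is immediate, so fix $k\neq0$ (after the harmless rescaling $s=|k|t$, $\nu_\ast=\nu/|k|$ one could even reduce to $|k|=1$; I keep $k$ below). Since $|\pa_y V(t,y)|\ge\mathfrak c$ on the connected set $[0,T]\times\rr$ and $\pa_y V$ is continuous, we may assume $\pa_y V\ge\mathfrak c>0$ throughout; the term $-\sigma\nu|k|^2 g$ is a nonnegative damping and only helps. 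By parabolic smoothing $g(t,\cdot)$ is smooth with rapid decay in $y$ for $t>0$, so all integrations by parts below are legitimate (a standard approximation covers $t=0$). The functional I would use is
\[
\Phi(t)=\|g\|_{L^2}^2+\alpha(t)\,\|\pa_y g\|_{L^2}^2+\beta(t)\,\operatorname{Re}\langle \pa_y g,\,ik\,g\rangle_{L^2},
\]
with $\alpha(t)=c_1\nu^{2/3}|k|^{-2/3}\chi(\lambda t)$, $\beta(t)=c_2\nu^{1/3}|k|^{-4/3}\chi(\lambda t)$, $\lambda=\delta\nu^{1/3}|k|^{2/3}$, where $\chi$ is a fixed smooth profile with $\chi(0)=0$, $\chi\equiv1$ on $[1,\infty)$, $0\le\chi'\le2$, and $c_1,c_2,\delta$ are small constants fixed at the end in terms of $\mathfrak c$ and $\|V\|_{L^\infty_tW^{3,\infty}_y}$. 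The decisive structural point is that $\Phi$ contains no $V$, so differentiating it never produces a $\pa_t V$ term: the time-dependence of the profile costs nothing for monotone flows. With $c_2^2\ll c_1$ the cross term is subordinate, $\tfrac34(\|g\|^2+\alpha\|\pa_y g\|^2)\le\Phi\le\tfrac54(\|g\|^2+\alpha\|\pa_y g\|^2)$, and $\alpha(0)=\beta(0)=0$ gives the normalization $\Phi(0)=\|g_0\|_{L^2}^2$ responsible for the constant $e$ in \eqref{mon_ED_main}.

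\smallskip Next I would record the three building blocks, obtained by integration by parts using only $V$ and $\pa_y V$: $\tfrac{d}{dt}\|g\|^2=-2\nu\|\pa_y g\|^2-2\sigma\nu k^2\|g\|^2$; $\tfrac{d}{dt}\|\pa_y g\|^2=-2\nu\|\pa_y^2 g\|^2-2\sigma\nu k^2\|\pa_y g\|^2-2\operatorname{Re}\langle \pa_y g, ik(\pa_y V)g\rangle$; and, using $2\operatorname{Re}\langle V\pa_y g,g\rangle=-\int(\pa_y V)|g|^2$,
\[
\tfrac{d}{dt}\operatorname{Re}\langle\pa_y g,ik g\rangle=-k^2\!\int(\pa_y V)\,|g|^2\,dy-2\nu k\,\operatorname{Im}\langle\pa_y^2 g,\pa_y g\rangle-2\sigma\nu k^2\operatorname{Re}\langle\pa_y g,ik g\rangle .
\]
The enhanced-dissipation gain is the first term, $-k^2\!\int(\pa_y V)|g|^2\le-\mathfrak c\,k^2\|g\|^2$. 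Assembling $\tfrac{d}{dt}\Phi$ and treating the three indefinite contributions with Young's inequality — absorb $\beta\nu k\operatorname{Im}\langle\pa_y^2 g,\pa_y g\rangle$ into $\alpha\nu\|\pa_y^2 g\|^2$; split $\alpha\operatorname{Re}\langle\pa_y g,ik(\pa_y V)g\rangle$ between $\tfrac12\beta k^2\!\int(\pa_y V)|g|^2$ and $\nu\|\pa_y g\|^2$, using $\|(\pa_y V)g\|^2\le\|\pa_y V\|_\infty\!\int(\pa_y V)|g|^2$ so that $\|V\|_{W^{1,\infty}}$ enters the constant; split $\dot\beta\operatorname{Re}\langle\pa_y g,ik g\rangle$ between $\|g\|^2$ and $\alpha\|\pa_y g\|^2$ — one arrives at
\[
\tfrac{d}{dt}\Phi\le\Big(-2\nu+\dot\alpha+K\nu\Big(\tfrac{c_2^2}{c_1}+\tfrac{c_1^2}{c_2}\|V\|_{W^{1,\infty}}+\dots\Big)\Big)\|\pa_y g\|^2+\Big(-\tfrac{\mathfrak c}{2}\beta k^2+\tfrac{\lambda}{4}+\dots\Big)\|g\|^2-\alpha\nu\|\pa_y^2 g\|^2 .
\]
The powers in $\alpha,\beta$ were chosen precisely so that $\beta k^2\sim\nu^{1/3}|k|^{2/3}\sim\lambda$, $\alpha\lambda\sim\nu$, $\alpha^2/\beta\sim\nu$, and $\dot\alpha\le 2c_1\delta\nu$; hence choosing $c_1,c_2$ small (the binding constraints being $c_2^2\lesssim c_1$ and $c_1^2\|V\|_{W^{1,\infty}}\lesssim c_2$) and then $\delta\lesssim\mathfrak c\,c_2$ makes the first bracket $\le-2\alpha\lambda$ and the second $\le-2\lambda$, once $\nu<\nu_0(V)$ ensures the genuinely lower-order remainders (positive powers of $\nu$, and the places where higher norms of $V$ from \eqref{V_y} appear in the careful bookkeeping) are negligible. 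Thus $\tfrac{d}{dt}\Phi\le-2\lambda(\|g\|^2+\alpha\|\pa_y g\|^2)\le-\lambda\Phi$.

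\smallskip Integrating $\tfrac{d}{dt}\Phi\le-\lambda\Phi$ and using $\|g\|^2\le\tfrac43\Phi$ with $\Phi(0)=\|g_0\|_{L^2}^2$ gives $\|g(t)\|_{L^2}^2\le\tfrac43\|g_0\|_{L^2}^2 e^{-\lambda t}$, i.e. $\|f_k(t)\|_{L^2}\le e\|f_{0;k}\|_{L^2}\exp\{-\tfrac12\delta\nu^{1/3}|k|^{2/3}t\}$, which is \eqref{mon_ED_main} after renaming $\delta$. I expect the main obstacle to be organizational rather than conceptual: arranging the nested smallness of $c_1,c_2,\delta$ so that simultaneously $\Phi$ stays coercive, the ramp losses $\dot\alpha\|\pa_y g\|^2$ and $\dot\beta\operatorname{Re}\langle\pa_y g,ikg\rangle$ are dominated, and the $\|\pa_y g\|^2$ balance closes against the single available factor $-2\nu$. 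The time-dependence of $V$ is not an obstruction here precisely because the multiplier is $V$-free; a $V$-weighted cross term such as $\operatorname{Re}\langle\pa_y g,ik(\pa_y V)g\rangle$ would generate an uncontrolled $\pa_t(\pa_y V)$, which is exactly why the plain $\operatorname{Re}\langle\pa_y g,ikg\rangle$ is the right choice.
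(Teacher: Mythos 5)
Your strategy is essentially the paper's own proof: the same hypocoercivity functional with a $V$-free cross term $\Re\langle if,\pa_y f\rangle$ and ramping time weights scaling as $\nu^{2/3}|k|^{-2/3}$ and $\nu^{1/3}|k|^{-1/3}$, the same three energy identities, and the same Young's-inequality bookkeeping with the same hierarchy of smallness among the constants; your observation that the time-dependence of $V$ costs nothing precisely because the cross multiplier contains no $V$ is exactly the mechanism the paper exploits. The one inaccuracy is the endgame: the uniform inequality $\frac{d}{dt}\Phi\le-\lambda\Phi$ cannot hold near $t=0$, since the only source of decay for $\|g\|_{L^2}^2$ is the $\beta$-term, which vanishes with $\chi(\lambda t)$ there, while the ramp loss from $\dot\beta$ contributes $+O(\delta)\lambda\|g\|_2^2$; so on the initial layer $[0,\lambda^{-1}]$ one only gets $\frac{d}{dt}\Phi\le C\lambda\Phi$, hence growth by a bounded factor, followed by exponential decay on $[\lambda^{-1},T]$. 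This two-interval patch is exactly what the paper does and is where the prefactor $e$ in \eqref{mon_ED_main} comes from, so the gap is cosmetic rather than structural.
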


The next theorem is stated as follows.

\begin{thm}\label{ndeg_main_thm} Consider the solution to the equation \eqref{PS} initiated from the smooth initial data $f_0\in C^\infty(\Torus^2)$. Assume that the shear flow $V(t,y)\in C^2_{t,y}$ satisfies the following structure assumptions on the time interval $[0,T]$: 

a) Phase assumption: There exists a nondegenerate reference shear $U\in C_t^1C_y^2$ such that the time-dependent flow $V(t,y)$ and the reference flow $U(t,y)$ share all their nondegenerate critical points $\{y_i(t)\}_{i=1}^N$, where $N$ is a fixed finite number. Moreover,
\begin{align}
\pa_y V(t,y)\pa_yU(t,y)\geq &0,\ \ \quad\forall y\in \Torus,\ \forall t\in[0,T], \\
\|\pa_{ty}U\|_{L_t^\infty([0,T];L^\infty_y)}\leq \nu^{3/4},\quad\|V&\|_{L_t^\infty([0,T]; W_y^{2,\infty})}+\|U\|_{L_t^\infty([0,T];W^{2,\infty})}<C.   \label{asmpt0}
\end{align}

b) Shape assumption: 
  there exist $N$ pairwise disjoint open neighborhoods $\{B_{r}(y_i(t))\}_{i=1}^N$ with fixed radius $0<r=\mathcal{O}(1)$, and two constants $\mathfrak{C}_0,\ \mathfrak{C}_1> 1$ such that the following estimates hold for $Z(t,y)\in \{V(t,y), U(t,y)\}$,
\begin{align}\label{asmp_1}
 \mathfrak{C}_0^{-1}(y-y_i(t))^2\leq |\pa_yZ|^2\leq& \mathfrak{C}_0(y-y_i(t))^2, \quad \mathfrak{C}_0>0, \quad \forall y\in B_{r}(y_i(t));\\ 
0<\mathfrak{C}_1^{-1}\leq  |\pa_y Z|\leq& \mathfrak{C}_1 ,\quad \forall y\notin \cup_{i=1}^N B_{r}(y_i(t)),\label{asmpt2} 
\end{align}
Then there exists a threshold $\nu_0(U,V)$ such that if $\nu\leq \nu_0$, the following estimate holds\begin{align}
\|f_{k}(t)\|_{L^2}\leq e\|f_{k}(0)\|_{L^2}\exp\lf\{-\delta\nu^{1/2}\myr{|k|^{1/2}} t\rg\},\quad \forall t\in [0, T],\label{ndeg_ED_main}
\end{align}
with $\delta$ depending on the functions $U,V$. In particular, it depends only on the parameters specified in the conditions above.    
\end{thm}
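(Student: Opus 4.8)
The plan is to prove Theorem~\ref{ndeg_main_thm} by a hypocoercivity argument with a time-dependent energy functional, following the strategy of \cite{BCZ15,Wei18} but carefully tracking the dependence on the moving critical points and on the reference flow $U$. Fix a frequency $k\neq 0$ and work with the $k$-by-$k$ equation \eqref{k_by_k_eq} for $g:=f_k$, writing $a=\nu|k|^2\geq 0$ for the diffusive damping in the $x$-direction and scaling so that the relevant fast time scale is $\tau=\nu^{1/2}|k|^{1/2}t$. I would first record the basic energy identity $\tfrac12\tfrac{d}{dt}\|g\|_{L^2}^2 = -\nu\|\pa_y g\|_{L^2}^2 - a\|g\|_{L^2}^2$ and note that this alone gives only the heat rate; the enhancement must come from coupling $\|g\|^2$ to the cross term $\mathrm{Re}\lan \pa_y g, iV' g\ran$ (the mechanism that converts $x$-transport into $y$-gradients) and to $\|V' g\|_{L^2}^2$.

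The heart of the argument is to design the functional
\[
\Phi(t) = \|g\|_{L^2}^2 + \alpha(t)\,\nu^{2/3}|k|^{-2/3}\|\pa_y g\|_{L^2}^2 + \beta(t)\,\nu^{1/3}|k|^{-1/3}\,\mathrm{Re}\lan i V'(t,\cdot) g, \pa_y g\ran + \gamma(t)\,\|V'(t,\cdot)\,g\|_{L^2}^2,
\]
with time weights $\alpha,\beta,\gamma$ chosen — as in the stationary theory, but now possibly $t$-dependent and supported on the initial transient — so that $\Phi$ is comparable to $\|g\|_{L^2}^2$ (here the factor $e$ in \eqref{ndeg_ED_main} absorbs the transient growth of the weights) and so that $\tfrac{d}{dt}\Phi \leq -2\delta\nu^{1/2}|k|^{1/2}\Phi$. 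Computing $\tfrac{d}{dt}\Phi$ produces the usual good terms plus \emph{new} commutator terms coming from $\pa_t V'$ acting in the $\beta$ and $\gamma$ pieces; this is exactly where the hypothesis $\|\pa_{ty}U\|_{L_t^\infty L_y^\infty}\leq\nu^{3/4}$ (and the implicit smallness of $\pa_t V'$ it forces via the shape comparison) is used to show these terms are lower order relative to $\nu^{1/2}|k|^{1/2}\Phi$. The nondegeneracy near the moving critical points enters through the shape assumption \eqref{asmp_1}: on $B_r(y_i(t))$ one has $|V'|\gtrsim |y-y_i(t)|$, so $\|V' g\|_{L^2}^2$ controls a weighted norm vanishing only at the $y_i(t)$, and one closes the estimate by a localization/partition-of-unity argument — away from the critical points \eqref{asmpt2} gives the monotone (better) rate, while near each $y_i(t)$ a rescaling $y\mapsto y_i(t) + \nu^{1/4}|k|^{-1/4} z$ reduces matters to a model operator whose spectral gap is $\mathcal{O}(1)$, yielding the $\nu^{1/2}|k|^{1/2}$ rate. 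The role of the reference flow $U$ is to provide a \emph{time-frozen} comparison profile: since $U$ has the same critical points and $\pa_y V\,\pa_y U\geq 0$, one can run the hypocoercivity estimate with $U'$ in place of $V'$ in the auxiliary terms (so that $\pa_t$ hits only $U'$, which is tiny), and then swap back to $V'$ using \eqref{asmp_1}–\eqref{asmpt2}, which bound $|V'|$ and $|U'|$ by comparable quantities.

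Concretely the steps are: (1) reduce to a single Fourier mode and fix notation and the rescaled time; (2) state the model lemma on a single nondegenerate critical point (the $\nu^{1/2}|k|^{1/2}$ spectral gap for the rescaled operator $-\pa_z^2 + i z^2$-type), which presumably is quoted or adapted from \cite{Wei18}; (3) introduce the partition of unity subordinate to $\{B_r(y_i(t))\}$ and the complement, with cutoffs that are frozen in $t$ on a time scale longer than $\nu^{-1/2}|k|^{-1/2}$ so their $\pa_t$ contributes negligibly; (4) build $\Phi$ and differentiate, isolating (a) the stationary good terms, (b) the new $\pa_t V'\approx \pa_t U'$ terms controlled by $\nu^{3/4}$, and (c) cutoff-commutator terms; (5) choose $\alpha,\beta,\gamma$ and $\delta$ (depending only on $\mathfrak{C}_0,\mathfrak{C}_1,N,r$ and $\|V\|_{W^{2,\infty}},\|U\|_{W^{2,\infty}}$) to make the differential inequality close for $\nu<\nu_0(U,V)$; (6) integrate and convert $\Phi\sim\|g\|^2$ into \eqref{ndeg_ED_main}. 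I expect the main obstacle to be step (4)(b)–(c): controlling the terms generated when $\pa_t$ falls on the time-dependent profile $V'$ (equivalently on the critical points $y_i(t)$) inside the auxiliary quadratic forms, and showing that with the $\nu^{3/4}$ bound on $\pa_{ty}U$ these are genuinely subordinate uniformly down to the critical points where $V'$ degenerates — this is the one place the time-dependence is not a routine perturbation of the stationary argument, and it is why the reference-flow device and the precise power $\nu^{3/4}$ are needed.
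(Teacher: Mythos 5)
Your overall strategy is the paper's: a hypocoercivity functional with time weights supported on the initial transient $[0,\nu^{-1/2}|k|^{-1/2}]$ (whence the factor $e$), auxiliary quadratic forms built from the \emph{reference} profile $\pa_yU$ so that time derivatives only ever hit $\pa_yU$ (controlled by $\|\pa_{ty}U\|_\infty\leq\nu^{3/4}$), the sign condition $\pa_yV\,\pa_yU\geq0$ together with the two-sided comparison $|\pa_yU|\approx|\pa_yV|$ from \eqref{asmp_1}--\eqref{asmpt2} to swap between the two profiles, and a weighted spectral inequality of the form \eqref{spectral} to convert $\|\pa_yU f\|_2^2$ into decay of $\|f\|_2^2$. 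Two remarks on differences: the paper does not rescale to a model operator near each $y_i(t)$ \`a la Wei; the nondegeneracy is used only through the elementary partition-of-unity inequality $\ep^{1/2}\|f\|_2^2\leq\ep\|\pa_yf\|_2^2+\mathfrak{C}_{\mathrm{Spec}}\|\pa_yU f\|_2^2$, so your step (2) is heavier machinery than needed. Also, your parenthetical that the hypotheses force ``implicit smallness of $\pa_tV'$'' is false and should be deleted: the theorem imposes no bound on $\pa_{ty}V$ (the target shear may vary fast; only its critical points move slowly), and \eqref{asmp_1} compares $|\pa_yV|$ with $|\pa_yU|$ pointwise but says nothing about their time derivatives. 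Any variant that keeps $V'$ inside the $\beta$- and $\gamma$-forms therefore cannot close; your later sentence replacing $V'$ by $U'$ is the correct (and necessary) fix, and is exactly the paper's device \eqref{hypo_ndeg}.

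There is, however, one concrete error that would make the displayed functional fail: your weights $\nu^{2/3}|k|^{-2/3}$, $\nu^{1/3}|k|^{-1/3}$, $1$ are the \emph{monotone-flow} exponents, not the nondegenerate ones. With $\ep=\nu|k|^{-1}$ the correct scaling is $\ep^{1/2}$ on $\|\pa_yf\|_2^2$, $\ep^{0}$ on the cross term, and $\ep^{-1/2}$ on $\|\pa_yUf\|_2^2$, as in \eqref{hypo_ndeg}. With your choice, the good term extracted from the cross term is of size $\beta\,\nu^{1/3}|k|^{2/3}\|\pa_yU f\|_2^2$, and feeding this into \eqref{spectral} yields decay of $\|f\|_2^2$ at rate $\nu^{1/3}|k|^{2/3}\cdot\ep^{1/2}=\nu^{5/6}|k|^{1/6}$, which is slower than the claimed $\nu^{1/2}|k|^{1/2}$ by a factor $\ep^{1/3}$; the differential inequality $\frac{d}{dt}\Phi\leq-2\delta\nu^{1/2}|k|^{1/2}\Phi$ cannot be reached. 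This is fixable by simply adopting the $\ep^{1/2},\,1,\,\ep^{-1/2}$ normalization (and the constraint $\beta^2\leq\al\gamma$ for the equivalence $\Phi\approx\|f\|_2^2+\dots$), after which your outline matches the paper's proof step for step.
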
 
\begin{figure}
\centering
\includegraphics[scale=0.6]{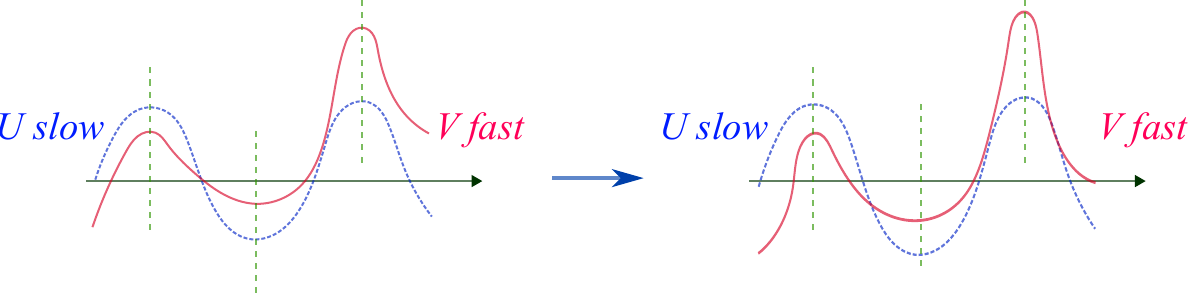}
\caption{Relation between $U,\, V$. The reference $U$ slowly varies, whereas the actual shear $V$ can change fast. However, the two shears share the same critical points. } \label{Figure:1}
\end{figure}
\ifx
\begin{thm}\label{ndeg_main_thm} Consider the solution to the equation \eqref{PS} initiated from the smooth initial data $f_0\in C^\infty(\Torus^2)$. Assume that the shear flow $V(t,y)\in C^3_{t,y}$ satisfies the following structure assumptions on the time interval $[0,T]$: 

a) Phase assumption: There exists a nondegenerate reference profile $U(\cdot,\cdot)\in C_{t,y}^3(\rr_+\times\Torus)$ such that the time-dependent flow $V(t,y)$ and the reference flow $U(t,y)$ share all their nondegenerate critical points $\{y_i(t)\}_{i=1}^N, \, N<\infty$. Moreover,
\begin{align}
\pa_y V(t,y)\pa_y U(t,y)\geq &0,\ \
\|\pa_{yt}U\|_{L_t^\infty([0,T];L^\infty_y)}\leq \nu^{3/4},\quad \|V\|_{L_t^\infty W_y^{3,\infty}}+\|U\|_{L_t^\infty W^{3,\infty}_y}<C,\quad\forall y\in \Torus,\ \forall t\in[0,T].   \label{asmpt0}
\end{align}

b) Shape assumption: 
  there exist $N$ pairwise disjoint open neighborhoods $\{B_{r_i}(y_i(t))\}_{i=1}^N,\, r_i=\mathcal{O}(1)$ and two constants $\mathfrak{C}_0,\ \mathfrak{C}_1> 1$ such that the following estimates hold for $Z(t,y)\in \{V(t,y), U(t,y))\}$
\begin{align}\label{asmp_1}
 \mathfrak{C}_0^{-1}(y-y_i(t))^2\leq& |\pa_yZ|^2\leq \mathfrak{C}_0(y-y_i(t))^2, \quad \mathfrak{C}_0>0, \quad \forall y\in B_{r_i}(y_i(t));\\ 
0<\mathfrak{C}_1^{-1}\leq  |\pa_y Z|\leq& \mathfrak{C}_1 ,\quad \forall y\notin \cup_{i=1}^N B_{r_i}(y_i(t)),\label{asmpt2} 
\end{align}
Then there exists a threshold $\nu_0(U,V)$ such that if $\nu\leq \nu_0$, the following estimate holds\begin{align}
\|f_{k}(t)\|_{L^2}\leq C\|f_{0;k}\|_{L^2}\exp\lf\{-\delta\nu^{1/2}\myr{|k|^{1/2}} t\rg\},\quad \forall t\in [0, T],\label{ndeg_ED_main}
\end{align}
with $C_{ED}$ and $\delta$ depending on the functions $U,V$. In particular, they depend only on the parameters specified in the conditions above.    
\end{thm}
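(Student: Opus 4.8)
The plan is to pass, via the Fourier transform in $x$, from \eqref{PS} to the scalar family \eqref{k_by_k_eq} on $\Torus$ with $\sigma=1$, to fix a frequency $k\in\Integers$ with $|k|\ge1$ (the mode $k=0$ solves the heat equation and decays at the faster rate $\nu$), and to run a hypocoercivity estimate for each such $k$. I would use a Villani-type functional
\[
\Phi(t)=\|f\|_{L^2}^2+\alpha(t)\,\|\pa_y f\|_{L^2}^2+2\beta(t)\,\mathrm{Re}\,\langle\pa_y f,\;ik\,b(t,\cdot)\,f\rangle+\gamma(t)\,k^2\,\|b(t,\cdot)\,f\|_{L^2}^2 ,
\]
where $b(t,y)$ is a smooth weight comparable to $\pa_y U(t,y)$ in the sense of \eqref{asmp_1}--\eqref{asmpt2} (possibly refined by a partition of unity subordinate to the neighbourhoods $B_r(y_i(t))$), and $\alpha,\beta,\gamma\ge0$ are time weights that vanish at $t=0$, are nondecreasing, and saturate after a time $\tau$. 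They are chosen so that (i) $\beta^2\le\tfrac14\alpha\gamma$ throughout, so Cauchy--Schwarz gives $\tfrac12\|f\|_{L^2}^2\le\Phi(t)$, and (ii) $\Phi(0)=\|f_0\|_{L^2}^2$, which forces the vanishing at $t=0$ since $\|\pa_y f_0\|_{L^2}$ is not controlled by $\|f_0\|_{L^2}$. The one genuinely new feature, compared with the stationary analyses that Theorem \ref{ndeg_main_thm} extends, is that the weight carries the \emph{slow} reference $U$, not the possibly fast-varying $V$.

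The heart of the argument is the differential inequality for $\Phi$. Writing \eqref{k_by_k_eq} as $\pa_t f=Lf+Tf$ with $Lf=\nu\pa_y^2f-\nu k^2f$ and $Tf=-ikVf$ (so $T$ is skew-adjoint), the transport $T$ contributes nothing to $\tfrac{d}{dt}\|f\|_{L^2}^2$, the usual cross term $-2\alpha\,\mathrm{Re}\langle\pa_y f,ik\,\pa_yV\,f\rangle$ to $\alpha\|\pa_y f\|_{L^2}^2$, and to $2\beta\,\mathrm{Re}\langle\pa_y f,ikbf\rangle$ the two pieces it generates (from $T$ landing on $\pa_y f$ and on $bf$) have their worst parts cancel \emph{exactly} after one integration by parts, leaving
\[
-2\beta\,k^2\int_{\Torus}\pa_yV(t,y)\,b(t,y)\,|f(t,y)|^2\,dy\ \le\ 0 ,
\]
whose sign is precisely the hypothesis $\pa_yV\,\pa_yU\ge0$ in \eqref{asmpt0}. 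By \eqref{asmp_1}--\eqref{asmpt2}, $\pa_yV\cdot b\gtrsim(y-y_i(t))^2$ on each $B_r(y_i(t))$ and $\gtrsim1$ off the union of neighbourhoods, so this good term dominates $\beta k^2\big(\sum_i\|(y-y_i(t))f\|_{L^2(B_r(y_i))}^2+\|f\|_{L^2(\Torus\setminus\cup B_r)}^2\big)$. Off the critical points this is already $\gtrsim\beta k^2\|f\|_{L^2}^2$; inside a neighbourhood one combines $\beta k^2\|(y-y_i)f\|_{L^2}^2$ with $-2\nu\|\pa_y f\|_{L^2}^2$ through the localized bound $\int\chi_i^2|f|^2\le\varepsilon\|\pa_y(\chi_i f)\|_{L^2}^2+\varepsilon^{-1}\|(y-y_i)\chi_i f\|_{L^2}^2$ (integrating by parts against $\tfrac{d}{dy}(y-y_i)=1$) and optimizes $\varepsilon$, converting $\nu\|\pa_y f\|_{L^2}^2+\beta k^2\|(y-y_i)f\|_{L^2}^2$ into a multiple of $\sqrt{\nu\beta}\,|k|\,\|f\|_{L^2}^2$. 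With the saturated value of $\beta$ of order $|k|^{-1}$ this is $\sim\nu^{1/2}|k|^{1/2}\|f\|_{L^2}^2$ --- the rate one expects, since the rescaling $y-y_i\mapsto(\nu/|k|)^{1/4}z$ turns the local model into $\nu^{1/2}|k|^{1/2}\big(\pa_z^2-\tfrac i2\,\mathrm{sgn}(k\,\pa_y^2V(y_i))\,z^2\big)$, a complex harmonic oscillator with $O(1)$ spectral gap.

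What remains is to absorb the error terms into this gain and into $-2\alpha\nu\|\pa_y^2f\|_{L^2}^2$, $-2\gamma\nu k^2\|b\pa_y f\|_{L^2}^2$: the terms where $\pa_t$ lands on the weight are $O(\|\pa_{ty}U\|_{L^\infty})=O(\nu^{3/4})$ by \eqref{asmpt0} and are lower order against the gain in every region (this is the role of the slow-variation hypothesis); the coefficient-derivative terms $\dot\alpha\|\pa_y f\|_{L^2}^2,\dot\beta(\cdots),\dot\gamma(\cdots)\ge0$ are dominated once $\alpha,\beta,\gamma$ saturate slowly; the dissipative cross terms reduce after integration by parts to quantities controlled by $\|b\|_{W^{2,\infty}}$, hence by \eqref{asmpt0}; and the bad cross term $-2\alpha\,\mathrm{Re}\langle\pa_y f,ik\,\pa_yV\,f\rangle\le\nu\|\pa_y f\|_{L^2}^2+\nu^{-1}\alpha^2k^2\|\pa_yV\,f\|_{L^2}^2$ is absorbed by the good term once $\alpha^2\lesssim\nu\beta$ (using $|\pa_yV|\lesssim b$). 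Granting that $\alpha,\beta,\gamma,\tau$ can be calibrated to meet all these constraints simultaneously, Grönwall gives $\Phi(t)\le\|f_0\|_{L^2}^2e^{-c\,\nu^{1/2}|k|^{1/2}t}$, whence $\|f_k(t)\|_{L^2}^2\le2\Phi(t)$ and \eqref{ndeg_ED_main} follows with $\delta=c/2$ and constant $\sqrt2\le e$ (treating, if needed, an initial window by the trivial bound $\|f_k(t)\|_{L^2}\le\|f_k(0)\|_{L^2}$, which is enough there since the exponent is $O(1)$).

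I expect the main obstacle to be exactly this interplay between the fast time dependence of $V$ and the hypocoercive structure. A functional built from $\pa_yV$ produces $\pa_{ty}V$ on differentiation in $t$ --- only bounded, hence fatal --- so one must carry the slow $U$ in the weight, and then the transport term survives only through the exact cancellation producing the sign-definite $-2\beta k^2\int\pa_yV\,b\,|f|^2$; this is what dictates the hypotheses of Theorem \ref{ndeg_main_thm} (shared nondegenerate critical points $\{y_i(t)\}$, $\pa_yV\,\pa_yU\ge0$, and comparable shapes near those points, i.e.\ \eqref{asmpt0}--\eqref{asmpt2}). The secondary difficulty is the quantitative calibration: $\alpha,\beta,\gamma,\tau$ (and the partition-of-unity scales) must be chosen --- $\beta$ morally of size $|k|^{-1}$, the rest pinned by $\beta^2\le\tfrac14\alpha\gamma$ and the absorption constraints --- so that every estimate is uniform in $k$ and in $\nu<\nu_0$, and it is this calibration that produces the explicit $\delta$ depending only on $\mathfrak{C}_0,\mathfrak{C}_1$, the number $N$ and radius $r$ of the neighbourhoods, and the $W^{2,\infty}$ norms in \eqref{asmpt0}. (In Theorem \ref{thm_mon} there are no critical points, the weight is uniformly elliptic, the shape hypotheses disappear and the localized step becomes a global Poincaré-type inequality --- hence the rate $\nu^{1/3}|k|^{2/3}$ there rather than $\nu^{1/2}|k|^{1/2}$.)
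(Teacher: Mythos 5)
Your proposal follows essentially the same route as the paper: the same four-term hypocoercivity functional with the cross and weighted terms built from the slow reference profile $\pa_y U$, the same exact cancellation after integration by parts leaving the sign-definite term $-k\int \pa_y V\,\pa_y U\,|f|^2$, the same localized spectral inequality converting $\nu\|\pa_y f\|_2^2 + |k|\,\|\pa_y U f\|_2^2$ into $\nu^{1/2}|k|^{1/2}\|f\|_2^2$, and the same time-weight calibration (weights vanishing at $t=0$ and saturating at $t\sim\nu^{-1/2}|k|^{-1/2}$, with the $\pa_{ty}U=\mathcal{O}(\nu^{3/4})$ errors absorbed as lower order). The calibration you defer is carried out in the paper with the explicit choices $\al\sim\beta^{1/2}$, $\gamma\sim\beta^{3/2}$ and $\beta$ small depending only on $C_\ast$, $\mathfrak{C}_{\mathrm{spec}}$ and $\|\pa_{yy}U\|_\infty$, confirming that your constraints are simultaneously satisfiable.
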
 
\fi
\begin{rmk}
We remark that if we consider the solution $V(t,y)=e^{-\nu t}\sin(y)$ to the heat equation $\pa_t V=\nu \pa_{yy}V$ on the torus, the structure conditions are satisfied for time $t\in[0,\mathcal{O}(\nu^{-1+})]$. 
\end{rmk}
\begin{rmk}
In our analysis of the time-dependent shear flows, the dynamics of the critical points are crucial. The main theorem encodes the dynamics of the critical points in the reference shear $U$. The relation between $U, \ V$ is highlighted in Figure \ref{Figure:1}. The condition $\|\pa_{ty}U\|_\infty\leq \nu^{3/4}$ enforces that the critical points of the target shear $V$ cannot move too fast. If this condition is violated, the fluid can trigger mixing and unmixing effects within a short time. Hence, it is not clear whether the enhanced dissipation phenomenon persists. 
\end{rmk}

Finally, we present the following theorem of the Taylor dispersion in an infinite long channel.
\begin{thm}\label{Taylor_thm}
Consider the equation \eqref{PS} in an infinite long channel $\rr\times [-1,1]$ subject to Dirichlet boundary condition $f(t,\pm 1)=0$. The initial data $f_0\in  H^1_0(\rr\times [-1,1])$ is consistent with the boundary condition. Assume that on the time interval $[0,T]$, the shear flow $V\in C_{t}^1C_y^2$ has finitely many critical points $\{y_i(t)\}_{i=1}^{N(t)}$ at every time instance $t\in[0,T]$. Furthermore, there exist four parameters $ \,m_0\in \mathbb{N},\, r_0=\mathcal{O}(1), \, \mathfrak{C}_2,\,\mathfrak{C}_3\geq 1$ such that the following non-degeneracy condition holds around each critical point $y_i(t)$:
\begin{align}\label{Taylorcnstr}
\|\pa_{ty}V\|_\infty\leq \mathfrak{C}_2\nu,\quad |y-y_i|^{m_0}\leq \mathfrak{C}_3|\pa_y V(t,y)|,\quad \forall |y-y_i|\leq r_0,\quad i\in\{1,2,\cdots, N(t)\} .
\end{align}
Then for $0<|k|\leq \nu$, the following estimate holds
\begin{align}
\|f_k(t)\|_{L^2}\leq e\|f_k(0)\|_{L^2}\exp\lf\{-\delta \frac{|k|^2}{\nu} t\rg\},\quad \forall t\in[0,T].
\end{align}
Here, the constant $\delta\in(0,1)$ only depends on the aforementioned properties of the shear flow $V$.
\end{thm}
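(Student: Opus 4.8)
The cleanest route avoids the hypocoercivity machinery altogether. First I would work with the reduced equation \eqref{k_by_k_eq} on $y\in[-1,1]$ with $f_k(t,\pm1)=0$ and differentiate the $L^2$ norm. Since $V$ is real the transport term $ikVf_k$ is skew-adjoint, and the Dirichlet condition kills the boundary term from $\pa_y^2$, so one gets the unconditional identity
\[
\frac{d}{dt}\|f_k(t)\|_{L^2}^2 = -2\nu\|\pa_y f_k(t)\|_{L^2}^2 - 2\sigma\nu|k|^2\|f_k(t)\|_{L^2}^2 \le -2\nu\|\pa_y f_k(t)\|_{L^2}^2 .
\]
Next I would apply the one-dimensional Dirichlet Poincaré inequality on $[-1,1]$, namely $\|\pa_y g\|_{L^2}^2\ge \frac{\pi^2}{4}\|g\|_{L^2}^2$ for $g(\pm1)=0$, to reach $\frac{d}{dt}\|f_k\|_{L^2}^2\le -\frac{\pi^2\nu}{2}\|f_k\|_{L^2}^2$. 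Finally, the hypothesis $|k|\le\nu$ gives $\nu\ge |k|^2/\nu$, so the right-hand side is $\le -\frac{\pi^2}{2}\frac{|k|^2}{\nu}\|f_k\|_{L^2}^2$; Grönwall then yields the claim with $\delta=\pi^2/4$ (hence any $\delta\in(0,1)$ works) and front constant $1\le e$. In this regime the ``Taylor dispersion time scale'' $\nu|k|^{-2}$ is never shorter than the heat time scale $\nu^{-1}$, so the estimate is really a corollary of plain heat dissipation, and none of the non-degeneracy hypotheses in \eqref{Taylorcnstr} enter this argument.

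It is still natural to present the proof through the same hypocoercivity functional used for Theorems \ref{thm_mon} and \ref{ndeg_main_thm}, both for uniformity and because the genuinely delicate part of the Taylor dispersion story — the one in which \eqref{Taylorcnstr} is indispensable — is the companion range $\nu\lesssim|k|\lesssim1$, where heat dissipation no longer suffices. There I would build a functional
\[
\Phi(t)=\|f_k\|_{L^2}^2 + a(t)\,\|\pa_y f_k\|_{L^2}^2 + b(t)\,|k|\,\mathrm{Re}\,\langle\, i(\pa_y V)\,f_k,\ \pa_y f_k\,\rangle ,
\]
with $t$-weights $a(t),b(t)$ (carrying the appropriate powers of $\nu$) tuned to the rate $|k|^2/\nu$, and differentiate it along \eqref{k_by_k_eq}. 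The cross term is what converts transport into damping: its time derivative produces a coercive contribution proportional to $|k|^2\|(\pa_y V)f_k\|_{L^2}^2$, which dominates $\|f_k\|_{L^2}^2$ away from the critical points. The slowness hypothesis $\|\pa_{ty}V\|_\infty\le\mathfrak{C}_2\nu$ is precisely what controls the error created when the $t$-derivative falls on the weight $\pa_y V$ inside that cross term, while the polynomial bound $|y-y_i|^{m_0}\le\mathfrak{C}_3|\pa_y V|$ measures the thin neighbourhoods of the (moving) critical points on which the shear provides no coercivity and the bare $-\nu\|\pa_y f_k\|_{L^2}^2$ damping must be used instead.

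The main obstacle in that second approach is exactly the degeneracy at the critical points: because $\|(\pa_y V)f_k\|_{L^2}^2$ is not uniformly comparable to $\|f_k\|_{L^2}^2$, one has to interpolate between the shear-coercive bound (good off the $y_i$) and the diffusive/Poincaré bound (good on the small sets near the $y_i$), and choose the exponents hidden in $a(t),b(t)$ so that every error term — the boundary contributions from integrating by parts against $\pa_y^2$ and $\pa_y^3$, the terms where $\dot a,\dot b$ appear, and the lower order $\sigma\nu|k|^2$ piece — is absorbed into $-\delta\frac{|k|^2}{\nu}\Phi$ with $\delta$ depending only on $m_0,r_0,\mathfrak{C}_2,\mathfrak{C}_3$. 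That the critical points move and their number $N(t)$ may vary forces the localization and the weights to be carried out with $t$-dependent cutoffs, but $\|\pa_{ty}V\|_\infty\le\mathfrak{C}_2\nu$ keeps this under control. For the stated range $|k|\le\nu$, however, the first paragraph already closes the proof, so this machinery is optional here.
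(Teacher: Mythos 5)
Your first paragraph is a complete and correct proof of the theorem \emph{as stated}, and it is genuinely different from (and far more elementary than) the paper's argument. The energy identity $\frac{d}{dt}\|f_k\|_2^2=-2\nu\|\pa_y f_k\|_2^2$ is exact under the Dirichlet condition, the one-dimensional Poincar\'e inequality on $[-1,1]$ gives $\|\pa_y f_k\|_2^2\geq \tfrac{\pi^2}{4}\|f_k\|_2^2$, and the hypothesis $|k|\leq\nu$ converts the resulting rate $\nu$ into $|k|^2/\nu$; this yields the conclusion with $\delta$ universal and prefactor $1\leq e$, without ever invoking \eqref{Taylorcnstr}. The paper instead runs the full hypocoercivity machinery: it differentiates the functional $\mathcal{T}$ in \eqref{hypo_Tay} with the time weight $\zeta=\min\{\nu^{-1}|k|^2t,1\}$, absorbs the error terms using $\|\pa_{ty}V\|_\infty\leq\mathfrak{C}_2\nu$ and the choice of $\al,\beta$, and closes via the spectral inequality \eqref{Spec_Tay}, which is where the polynomial non-degeneracy in \eqref{Taylorcnstr} enters; this proves the stronger functional decay \eqref{Hypo_est_Taylor}. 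Your diagnosis is accurate: in the Dirichlet setting and in the regime $0<|k|\leq\nu$, the cross term in $\mathcal{T}$ buys nothing that the bare diffusive term plus Poincar\'e does not already provide, so the structural hypotheses are not needed for the stated conclusion. What the paper's route buys is uniformity of method with Theorems \ref{thm_mon} and \ref{ndeg_main_thm}, and it is the argument one would actually need under no-flux (Neumann) conditions, where the Poincar\'e inequality fails for the cross-sectional mean and the coercivity must come from $\|\pa_yV f\|_2^2$; note also that the complementary regime $|k|\gtrsim\nu$ you mention is covered by the enhanced-dissipation theorems rather than by this one. Your second and third paragraphs are a reasonable sketch of that hypocoercivity argument (your $\Phi$ is essentially the paper's $\mathcal{T}$), but as you say they are optional for the stated range.
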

\begin{rmk}
We remark that the $f_0\in H_0^1$ can be relaxed to $f_0\in L^2. $ This is because the passive scalar equation is smoothing, and the solution will automatically lie in $H_0^1$ for $t>0$. 
\end{rmk}
\begin{rmk}In the paper \cite{CotiZelatiGallay21}, the authors show that one can use the resolvent estimate to derive the enhanced dissipation and Taylor dispersion simultaneously. Here, we observe that one can achieve the same goal utilizing the hypocoercivity techniques. This method is potentially more flexible because it can treat certain time-dependent cases. 
\end{rmk}
The hypocoercivity energy functional introduced in  \cite{BCZ15} is our main tool to prove the main theorems. However, we choose to incorporate time-weights introduced in the papers \cite{WeiZhang19} into our setting. Let us define a parameter and two time weights
\begin{align}
    \ep:=& \nu|k|^{-1},\quad 
    \psi=  \min\{\nu^{1/3}\myr{|k|^{2/3}}t,1\},\quad \phi=\min\{\nu^{1/2}\myr{|k|^{1/2}}t,1\},\quad \zeta=\min\{\nu^{-1}|k|^{2}t, 1\}.\label{ep&t_weights} 
\end{align}
We observe that the derivatives of the time weights are compactly supported:
\begin{align}
    \psi'(t)=\nu^{1/3}\myr{|k|^{2/3}}\mathbbm{1}_{[0,\nu^{-1/3}\myr{|k|^{-2/3}}]}(t),\quad \phi'(t)=\nu^{1/2}\myr{|k|^{1/2}}\mathbbm{1}_{[0,\nu^{-1/2}\myr{|k|^{-1/2}}]}(t),\quad \zeta'(t)=\nu^{-1}\myr{|k|^{2}}\mathbbm{1}_{[0,\nu\myr{|k|^{-2}}]}(t).\\  \label{dt_weights}
\end{align}
To prove Theorem \ref{thm_mon}, Theorem \ref{ndeg_main_thm} and Theorem \ref{Taylor_thm}, we invoke the following hypocoercivity functionals
\begin{align}
\text{Theorem \ref{thm_mon}:\, }&\mathcal{F}[f_k]:=  \|f_k\|_{2}^2+\al \psi\ep^{2/3}\ \|\pa_y f_k\|_{2}^2+\beta \psi^2\ep^{1/3} \ \Re\lan i\text{sign}(k) f_k,\pa_y f_k \ran;\label{hypo_mono}\\
\text{Theorem \ref{ndeg_main_thm}:\, }&\mathcal{G}[f_k]:=  \|f_k\|_{2}^2+\al \phi\ep^{1/2}\ \|\pa_y f_k\|_{2}^2+\beta \phi^2 \ \Re\lan i\text{sign}(k) \pa_y U f_k,\pa_y f_k \ran+\gamma\phi^3\ep^{-1/2} \|\pa_y U f_k\|_2^2;\label{hypo_ndeg}\\
\text{Theorem \ref{Taylor_thm}:\, }&\mathcal{T}[f_k]:=  \|f_k\|_{2}^2+\al \zeta\ \|\pa_y f_k\|_{2}^2+\beta \zeta \ \Re\lan i\text{sign}(k) \pa_y V f_k,\pa_y f_k \ran.&&&&\label{hypo_Tay}
\end{align}
Here, the inner product $\lan \cdot ,\cdot\ran$ is defined in \eqref{inner_prod}.

Through detailed analysis, one can derive the following statements.

\begin{subequations}
\noindent
a) Assume all conditions in Theorem \ref{thm_mon}. There exist parameters $\al=\mathcal{O}(1),\, \beta=\mathcal{O}(1)$ such that the following estimate holds on the time interval $[0,T]$:
\begin{align}
    \mathcal{F}[f_k](t)\leq C\mathcal{F}[f_{0;k}]\exp\lf\{-\delta \nu^{1/3}\myr{|k|^{2/3}}t\rg\}=C\|f_{0;k}\|_2 ^2\exp\lf\{-\delta \nu^{1/3}\myr{|k|^{2/3}}t\rg\},\quad \forall t\in[0,T].\label{Hypo_est_mon}
\end{align}

\noindent
b) Assume all conditions in Theorem \ref{ndeg_main_thm}. Then there exist parameters $\al=\mathcal{O}(1),\, \beta=\mathcal{O}(1),\, \gamma=\mathcal{O}(1)$ such that the following estimate holds for $t\in[0,T]$,
\begin{align}
    \mathcal{G}[f_k](t)\leq C\mathcal{G}[f_{0;k}]\exp\{-\delta \nu^{1/2}\myr{|k|^{1/2}}t\}=C\|f_{0;k}\|_2 ^2\exp\lf\{-\delta \nu^{1/2}\myr{|k|^{1/2}}t\rg\},\quad \forall t\in[0,T].\label{Hypo_est_ndeg}
\end{align}

\noindent
c) Assume all conditions in Theorem \ref{Taylor_thm}. Then there exist parameters $\al=\mathcal{O}(1),\, \beta=\mathcal{O}(1)$ such that the following estimate holds for $t\in[0,T]$,
\begin{align}
    \mathcal{T}[f_k](t)\leq C\mathcal{T}[f_{0;k}]\exp\{-\delta \nu^{-1}\myr{|k|^{2}}t\}=C\|f_{0;k}\|_2^2\exp\lf\{-\delta \nu^{-1}\myr{|k|^{2}}t\rg\},\quad \forall t\in[0,T].\label{Hypo_est_Taylor}
\end{align}
Hence, we can apply the observation that $\|f_k\|_2^2\leq \mathcal{F}[f_k],\, \mathcal{G}[f_k],\,\mathcal{T}[f_k] $ to derive Theorem \ref{thm_mon}, \ref{ndeg_main_thm}, \ref{Taylor_thm}.  
\end{subequations}

We organize the remaining sections as follows: in section \ref{monotone}, we prove Theorem \ref{thm_mon}; in section \ref{nondegenerate}, we prove Theorem \ref{ndeg_main_thm}; in section \ref{Taylor}, we prove Theorem \ref{Taylor_thm}.

\noindent
    {\bf Notations:}
    For two complex-valued functions $f,g$, we define the inner product\begin{align}
         \lan f,g\ran=\int_{D} f\overline{g}dy.\label{inner_prod}
    \end{align}
    Here $D$ is the domain of interest. 
    Furthermore, we introduce the $L^p$-norms ($p\in[1,\infty)$) 
    \begin{align}
        \|f\|_{p}=\|f\|_{L^p}=\left(\int |f|^p dy\right)^{1/p},\quad p\in[1,\infty). 
    \end{align}
    We also recall the standard extension of this definition to the $p=\infty$ case. We further recall the standard definition for Sobolev norms of functions $f(y),\, g(t,y)$:
    \begin{align}
        \|f\|_{W^{m,p}_y}=\lf(\sum_{k=0}^m\|\pa_y^m f\|_{L^p}^p\rg)^{1/p}, \quad p\in [1,\infty];\quad \
        \|g\|_{L_t^q W^{m,p}_y}=\|\|g\|_{W^{m,p}_y}\|_{L_t^q}, \quad p,q\in [1,\infty].
    \end{align}
    We will also use classical notations $H^1=W^{1,2}$ and $H_0^1$ (the $H^1$ functions with zero trace on the boundary). 
    We use the notation $A\approx B$ ($A,B>0$) if there exists a constant $C>0$ such that $\frac{1}{C}B\leq A\leq CB$. Similarly, we use the notation $A\lesssim B$ ($A\gtrsim B$) if there exists a constant $C$ such that $A\leq CB$ ($A\geq B/C$). Throughout the paper, the constant $C$ can depend on the norm $\|V\|_{L_t^\infty W^{3,\infty}_{y}},\, \|U\|_{L_t^\infty W_y^{3,\infty}}$, but it will never depend on $\nu,\, |k|$. The meaning of the notation $C$ can change from line to line.

    \section{Enhanced Dissipation: Strictly Monotone Shear Flows}\label{monotone}In this section, we prove the estimate \eqref{mon_ED_main} for the hypoellitic passive scalar equation $\eqref{k_by_k_eq}_{\sigma=0}$. The proof of the $\sigma=1$ case is similar and simpler. Throughout the remaining part of the paper, we adopt the following notation
\begin{align}\label{notation_simp}
    f(t,y):= \wh f_k(t,y).
\end{align}
Without loss of generality, we assume that 
\begin{align}\label{WLOG}
\pa_y V>0, \quad k\geq 1.\end{align} 

Let us start with a simple observation.
\begin{lem}\label{lem:monotone fnc}
    Assume the relation
    \begin{align}\label{monotone bound req}
    {\al} > \beta^2.
    \end{align}
    Then, the following relations hold
    \begin{align} 
        \frac{1}{2}(\|f\|_2^2 +\al\ep^{2/3}\psi\|\pa_yf\|_2^2) \leq \cF[f] \leq  \frac{3}{2}(\|f\|_2^2 +\al\ep^{2/3}\psi\|\pa_yf\|_2^2),\quad \forall t \in[0,T].\label{equiv_mono}
    \end{align}
\end{lem}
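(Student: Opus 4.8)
The plan is to control the cross term $\beta \psi^2 \ep^{1/3}\, \Re\lan i\,\text{sign}(k) f, \pa_y f\ran$ by a fraction of the sum of the two positive terms $\|f\|_2^2$ and $\al \ep^{2/3}\psi \|\pa_y f\|_2^2$, using a weighted Cauchy--Schwarz (Young's) inequality. First I would bound the magnitude of the cross term:
\[
\beta \psi^2 \ep^{1/3}\, \abs{\Re\lan i\,\text{sign}(k) f, \pa_y f\ran} \leq \beta \psi^2 \ep^{1/3}\, \|f\|_2\, \|\pa_y f\|_2.
\]
Then I would split $\psi^2 \ep^{1/3} = \psi^{1/2}\cdot \psi^{3/2}\ep^{1/3}$ and apply Young's inequality $ab \le \frac{1}{2}(\lambda a^2 + \lambda^{-1} b^2)$ with the right choice of $\lambda$ so that the $\|f\|_2^2$-term comes out with coefficient proportional to $\psi$ times a constant I can make small, and the $\|\pa_y f\|_2^2$-term comes out with coefficient proportional to $\ep^{2/3}\psi$. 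Concretely, writing $\beta\psi^2\ep^{1/3}\|f\|_2\|\pa_y f\|_2 = \beta (\psi^{1/2}\|f\|_2)(\psi^{3/2}\ep^{1/3}\|\pa_y f\|_2)$ and using $2ab \le a^2 + b^2$ gives an upper bound $\tfrac{\beta}{2}\psi\|f\|_2^2 + \tfrac{\beta}{2}\psi^3\ep^{2/3}\|\pa_y f\|_2^2$; since $\psi \le 1$ on $[0,T]$, both $\psi$ and $\psi^3$ are $\le 1$, so this is $\le \tfrac{\beta}{2}\|f\|_2^2 + \tfrac{\beta}{2}\ep^{2/3}\psi\|\pa_y f\|_2^2$ after using $\psi^3 \le \psi$.

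Next I would observe that this is not quite enough by itself: to absorb $\tfrac{\beta}{2}\ep^{2/3}\psi\|\pa_y f\|_2^2$ into $\tfrac12 \al \ep^{2/3}\psi\|\pa_y f\|_2^2$ I only need $\beta \le \al$, which is weaker than the hypothesis \eqref{monotone bound req}. The cleaner route that uses exactly ${\al} > \beta^2$ is to instead apply Young's inequality with the weight tuned to the two target terms directly: bound
\[
\beta\psi^2\ep^{1/3}\|f\|_2\|\pa_y f\|_2 \le \frac{\beta^2}{2\al}\psi^2 \|f\|_2^2 + \frac{\al}{2}\ep^{2/3}\psi^2\|\pa_y f\|_2^2,
\]
and then use $\psi \le 1$ to replace $\psi^2$ by $\psi \le 1$ in the first term (getting $\le \tfrac{\beta^2}{2\al}\|f\|_2^2 \le \tfrac12\|f\|_2^2$ since $\beta^2 < \al$, actually $\tfrac{\beta^2}{2\al} < \tfrac12$) and by $\psi$ in the second term (getting $\tfrac{\al}{2}\ep^{2/3}\psi\|\pa_y f\|_2^2$). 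Adding and subtracting from $\|f\|_2^2 + \al\ep^{2/3}\psi\|\pa_y f\|_2^2$ then yields the lower bound $\cF[f] \ge \tfrac12(\|f\|_2^2 + \al\ep^{2/3}\psi\|\pa_y f\|_2^2)$ and the upper bound $\cF[f] \le \tfrac32(\|f\|_2^2 + \al\ep^{2/3}\psi\|\pa_y f\|_2^2)$, which is \eqref{equiv_mono}.

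There is no real obstacle here — the statement is a coercivity/equivalence lemma and the only subtlety is choosing the splitting of the time-weight powers so that the strict inequality $\al > \beta^2$ is exactly what is consumed, together with the boundedness $0 \le \psi \le 1$ on $[0,T]$ from the definition \eqref{ep&t_weights}. I would present it as: start from $|\cF[f] - (\|f\|_2^2 + \al\ep^{2/3}\psi\|\pa_y f\|_2^2)| = \beta\psi^2\ep^{1/3}|\Re\lan i\,\text{sign}(k)f,\pa_y f\ran|$, apply Cauchy--Schwarz and weighted Young, use $\psi^2 \le \psi \le 1$, and conclude the bound is $\le \tfrac12(\|f\|_2^2 + \al\ep^{2/3}\psi\|\pa_y f\|_2^2)$, from which both inequalities in \eqref{equiv_mono} follow immediately by the triangle inequality. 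The same computation will recur (with more terms) in the coercivity lemmas for $\cG$ and $\cT$, so it is worth stating the Young-inequality bookkeeping cleanly here.
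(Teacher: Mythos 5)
Your proposal is correct and is essentially the paper's own argument: both bound the cross term by $\|f\|_2\|\pa_yf\|_2$ and apply Young's inequality with weight $\al/\beta$ to produce $\frac{\beta^2}{2\al}\psi^{a}\|f\|_2^2+\frac{\al}{2}\psi\ep^{2/3}\|\pa_yf\|_2^2$, then use $\psi\le 1$ and $\al>\beta^2$ to make the first coefficient at most $\tfrac12$. The only cosmetic difference is how the powers of $\psi$ are distributed ($\psi^2=\psi\cdot\psi$ in your version versus $\psi^{3/2}\cdot\psi^{1/2}$, hence $\psi^3$, in the paper's), which is immaterial since $0\le\psi\le1$.
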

\begin{proof}To prove the estimate, we recall the definition of $\mathcal{F}$ \eqref{hypo_mono}, and estimate it using H\"older inequality,  Young's inequality,
    \begin{align}
        \cF[f] \leq& \|f\|_2^2 + \al\ep^{2/3}\psi\|\pa_yf\|_2^2 + \beta\psi^2\ep^{1/3}\|f\|_2\|\pa_yf\|_2 
        \leq   \lf(1 + \frac{\beta^2}{2\al}\psi^3\rg)\|f\|_2^2 +  \frac{3\al}{2} \ep^{2/3}\psi\|\pa_yf\|_2^2
    \end{align}Similarly, we have the following lower bound,
    \begin{align}
        \cF[f] \geq& \|f\|_2^2 + \al\ep^{2/3}\psi\|\pa_yf\|_2^2 - \beta\ep^{1/3}\psi^2\|f\|_2\|\pa_yf\|_2\geq \lf(1- \frac{\beta^2}{2\al}\psi^3\rg)\|f\|_2^2 + \frac{\al}{2}\ep^{2/3}\psi\|\pa_yf\|_2^2
    \end{align}Since $ {\al} > {\beta^2}$, we obtain that
    \begin{align} 
    \frac{1}{2}\|f\|_2^2 +\frac{1}{2}\al \ep^{2/3}\psi\|\pa_yf\|_2^2\leq \cF[f] \leq \frac{3}{2}\|f\|_2^2 +\frac{3}{2}\al \ep^{2/3}\psi\|\pa_yf\|_2^2,\quad \forall t\in [0,T].
    \end{align}
    This concludes the proof of the lemma.
\end{proof}
By taking the time derivative of the hypocoercivity functional, \eqref{hypo_mono}, we end up with the following decomposition:
\begin{align}\label{T_albe_term}
    \frac{d}{dt}\cF[f]=&\frac{d}{dt} \|f\|_2^2+\al\ep^{2/3}\frac{d}{dt}\lf(\psi\|\pa_{y}f\|_2^2\rg)+\beta\ep^{1/3}\frac{d}{dt}\lf(\psi^2\Re\langle i f,\pa_y f\rangle \rg)
    =:  T_{L^2} + T_\al+ T_\beta.
\end{align}
Through standard energy estimates, we observe that 
\begin{align}
    T_{L^2}=-2\nu\int |\pa_y f|^2 dy- 2\Re i\int  V f \overline{f} dy = - 2\nu\|\pa_yf\|_2^2.\label{T_L2}
\end{align}
The estimates for the $T_\al,\ T_\beta$ terms are trickier, and we collect them in the following technical lemmas whose proofs will be postponed to the end of this section.
\begin{lem}[$\al$-estimate]\label{lem:monotone al}
    For any constant $B>0$, the  following estimate holds on the interval $[0,T]$:
    \begin{align}
       T_\al
       \leq&\al\ep^{2/3}\psi'\|\pa_y f\|_2^2-2\al\psi\ep^{2/3}\nu\|\pa_y^2f\|_2^2+\frac{\beta}{B} \psi^2\ep^{1/3}|k|\|\|\sqrt{|\pa_yV|}f\|_2^2+  \frac{B\al^2}{\beta}\|\pa_y V\|_\infty\nu   \|\pa_yf\|_2^2.\label{monotone al estimate}
    \end{align}
\end{lem}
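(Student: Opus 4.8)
\textbf{Proof plan for Lemma \ref{lem:monotone al}.}

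The plan is to compute $T_\al = \al\ep^{2/3}\frac{d}{dt}(\psi\|\pa_y f\|_2^2)$ by distributing the time derivative and then substituting the evolution equation for $\pa_y f$. First I would write $\frac{d}{dt}(\psi\|\pa_y f\|_2^2) = \psi'\|\pa_y f\|_2^2 + 2\psi\,\Re\langle \pa_t\pa_y f,\pa_y f\rangle$. The term $\psi'\|\pa_y f\|_2^2$ is already in the desired form and contributes the first term on the right-hand side of \eqref{monotone al estimate}. For the second piece, I would differentiate the $k$-by-$k$ equation $\eqref{k_by_k_eq}_{\sigma=0}$, namely $\pa_t f = \nu\pa_y^2 f - ikVf$, in $y$ to get $\pa_t\pa_y f = \nu\pa_y^3 f - ikV\pa_y f - ik(\pa_y V)f$. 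Pairing with $\pa_y f$ and taking the real part: the diffusion term $\nu\Re\langle \pa_y^3 f,\pa_y f\rangle = -\nu\|\pa_y^2 f\|_2^2$ after integration by parts (no boundary terms since $f \in C_c^\infty$); the transport term $-ik\,\Re\langle V\pa_y f,\pa_y f\rangle = 0$ because $V$ is real and $\Re\langle iVg,g\rangle = 0$ for any $g$; and the remaining commutator term $-ik\,\Re\langle (\pa_y V)f,\pa_y f\rangle$ is the only genuinely new contribution.

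The heart of the argument is estimating the commutator term $-2\al\psi\ep^{2/3}\cdot k\,\Re\langle i(\pa_y V)f,\pa_y f\rangle$ (recall $k\ge 1$, $\ep = \nu|k|^{-1}$, so $\ep^{2/3}k = \nu^{2/3}k^{1/3}$). By Cauchy–Schwarz and Young's inequality with the free parameter $B$, I would split
\begin{align}
2\al\psi\ep^{2/3}k\,|\langle i(\pa_y V)f,\pa_y f\rangle| \leq 2\al\psi\ep^{2/3}k\,\|\sqrt{|\pa_y V|}\,f\|_2\,\|\sqrt{|\pa_y V|}\,\pa_y f\|_2,
\end{align}
then absorb $\|\sqrt{|\pa_y V|}\,\pa_y f\|_2 \le \sqrt{\|\pa_y V\|_\infty}\,\|\pa_y f\|_2$ and apply Young's inequality $2ab \le \frac{\lambda}{1}a^2 + \frac{1}{\lambda}b^2$ with the weight chosen so that the first factor lands with coefficient $\frac{\beta}{B}\psi^2\ep^{1/3}k$ and the second with coefficient $\frac{B\al^2}{\beta}\|\pa_y V\|_\infty\nu$. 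Concretely, one balances $\al\psi\ep^{2/3}k = \sqrt{(\tfrac{\beta}{B}\psi^2\ep^{1/3}k)(\tfrac{B\al^2}{\beta}\|\pa_y V\|_\infty\nu)} / \sqrt{\|\pa_y V\|_\infty}$, and a direct check using $\ep^{2/3}k\cdot\ep^{2/3}k = \ep^{1/3}k\cdot\ep k = \ep^{1/3}k\cdot\nu$ confirms the dimensional bookkeeping works out (the factor $\ep k = \nu$ is what makes the two target coefficients multiply to $(\al\psi\ep^{2/3}k)^2\|\pa_y V\|_\infty$). Combining these with the $\psi'$ and $-2\nu\|\pa_y^2 f\|_2^2$ terms and multiplying through by $\al\ep^{2/3}$ yields \eqref{monotone al estimate}.

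I expect the main obstacle to be purely the bookkeeping of powers of $\ep$, $\nu$, and $|k|$ in the Young's inequality split — making sure the two output terms have exactly the stated coefficients $\frac{\beta}{B}\psi^2\ep^{1/3}|k|$ and $\frac{B\al^2}{\beta}\|\pa_y V\|_\infty\nu$ rather than something off by a power of $\ep$. There is no analytic difficulty: every step is an integration by parts (justified by the compact support of $f_0$ and the smoothing of the equation, so $f(t,\cdot) \in C_c^\infty$ for the Schwartz-class decay needed), the identity $\Re\langle iVg,g\rangle = 0$, Cauchy–Schwarz, and Young's inequality. One minor care point is that $\psi$ is only Lipschitz (not $C^1$) in $t$, so $\frac{d}{dt}(\psi\|\pa_y f\|_2^2)$ should be understood in the a.e./absolutely-continuous sense, with $\psi' = \nu^{1/3}|k|^{2/3}\one_{[0,\nu^{-1/3}|k|^{-2/3}]}$ as recorded in \eqref{dt_weights}; this causes no trouble in the subsequent Grönwall-type integration.
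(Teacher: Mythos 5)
Your proposal is correct and takes essentially the same route as the paper's proof: differentiate $\psi\|\pa_y f\|_2^2$ in time, substitute the $y$-differentiated equation, integrate by parts so that the transport term vanishes and the diffusion term yields $-\nu\|\pa_y^2 f\|_2^2$, and then control the commutator term $-ik\lan (\pa_y V)f,\pa_y f\ran$ by Cauchy--Schwarz and Young's inequality with the free parameter $B$. Your dimensional check using $\ep|k|=\nu$ reproduces exactly the paper's coefficients, so nothing further is needed.
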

\begin{lem}[$\beta$-estimate]\label{lem:monotone beta}
    The following estimate holds
    \begin{align}\n
        T_\beta \leq&\frac{\beta}{\sqrt{\al}} \nu^{1/3}\myr{|k|^{2/3}}\mathbbm{1}_{[0,\nu^{-1/3}\myr{|k|^{-2/3}}]}(t) \lf(\|f\|_2^2+ \al\ep^{2/3} \psi\|\pa_yf\|_2^2\rg)\\
        &+\frac{ \beta^2}{\al}\psi^3\nu\|\pa_y f\|_2^2 +{\al}\psi\ep^{2/3}\nu\| \pa_{yy}f\|_2^2-  {\beta}\psi^2\ep^{1/3}|k|\lf\|  \sqrt{|\pa_yV|}|f|\rg\|_{2}^2.\label{monotone beta estimate}
    \end{align}
\end{lem}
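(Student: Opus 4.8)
The plan is to prove Lemma~\ref{lem:monotone beta} by expanding the time derivative, integrating by parts in $y$, and then Young-splitting every cross term against the two ``good'' quantities that the energy estimate \eqref{T_L2} and the $\al$-estimate already produce, namely $\nu\|\pa_y^2 f\|_2^2$ and $|k|\,\|\sqrt{|\pa_y V|}f\|_2^2$. Concretely, write
\begin{align}
T_\beta = \beta\ep^{1/3}\Big( 2\psi\psi'\,\Re\lan i f,\pa_y f\ran + \psi^2\,\tfrac{d}{dt}\Re\lan i f,\pa_y f\ran\Big),
\end{align}
where the time derivative of the inner product is computed from the evolution equation \eqref{k_by_k_eq} (recall $\sigma=0$ here): substituting $\pa_t f = \nu\pa_y^2 f - ikVf$ into both slots of $\Re\lan if,\pa_y f\ran$ gives a viscous contribution and a transport contribution.

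\textbf{Step 1: the $\psi\psi'$ term.} Use $|\Re\lan if,\pa_y f\ran|\le \|f\|_2\|\pa_y f\|_2$ and then $2\|f\|_2\|\pa_y f\|_2 \le \al^{-1/2}\ep^{-1/3}\|f\|_2^2 + \al^{1/2}\ep^{1/3}\|\pa_y f\|_2^2$, which after multiplying by $\beta\ep^{1/3}\psi\psi'$ and using $\psi\le 1$, $\psi' = \nu^{1/3}|k|^{2/3}\one_{[0,\nu^{-1/3}|k|^{-2/3}]}$, $\ep^{2/3}=(\nu/|k|)^{2/3}$ produces exactly the first line of \eqref{monotone beta estimate}, i.e. $\tfrac{\beta}{\sqrt\al}\nu^{1/3}|k|^{2/3}\one_{[\cdots]}(\|f\|_2^2 + \al\ep^{2/3}\psi\|\pa_y f\|_2^2)$.

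\textbf{Step 2: the viscous part of $\tfrac{d}{dt}\Re\lan if,\pa_y f\ran$.} The $\nu$-terms are $\nu\Re\lan i\pa_y^2 f,\pa_y f\ran + \nu\Re\lan if,\pa_y^3 f\ran$; integrating by parts in the second one ($\lan if,\pa_y^3 f\ran = -\lan i\pa_y f,\pa_y^2 f\ran$, boundary terms vanish since $f=\wh f_k\in C_c^\infty$ decays) and combining, one is left with something controlled by $C\nu\|\pa_y f\|_2\|\pa_y^2 f\|_2$; Young's inequality against $\al\psi\ep^{2/3}\nu\|\pa_y^2 f\|_2^2$ on one side generates the $+\al\psi\ep^{2/3}\nu\|\pa_{yy}f\|_2^2$ term and a leftover $\tfrac{\beta^2}{\al}\psi^3\nu\|\pa_y f\|_2^2$ after inserting the weights $\beta\ep^{1/3}\psi^2$ and using $\ep^{1/3}\cdot\ep^{1/3}=\ep^{2/3}$ — this matches the second line of \eqref{monotone beta estimate} up to constants.

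\textbf{Step 3: the transport part — this is the crux.} The terms are $-ik\big(\Re\lan i(-V f),\pa_y f\ran + \Re\lan if,\pa_y(-Vf)\ran\big)$. Expanding $\pa_y(Vf) = V'f + V\pa_y f$ and using that the $V\pa_y f$ pieces cancel with the $Vf$-in-the-first-slot piece (the standard hypocoercivity computation: the skew-symmetric transport operator contributes only through the commutator $[\pa_y, V] = V'$), what survives is $-k\,\Re\lan f, V' f\ran\cdot(\text{sign factor}) = -k\int V'|f|^2\,dy$ up to sign; since $V' = \pa_y V > 0$ by \eqref{WLOG}, this is $-|k|\int |\pa_y V|\,|f|^2\,dy = -|k|\,\|\sqrt{|\pa_y V|}f\|_2^2$, and after the weight $\beta\ep^{1/3}\psi^2$ it gives precisely the crucial negative term $-\beta\psi^2\ep^{1/3}|k|\,\|\sqrt{|\pa_y V|}f\|_2^2$ in \eqref{monotone beta estimate}. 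I would double-check the sign bookkeeping with the $i\,\text{sign}(k)$ factor and $\Re$ carefully, since that is where errors hide; with $k\ge 1$ normalized in \eqref{WLOG} the sign works out. The main obstacle is making sure no stray first-order-in-$y$ terms (e.g. involving $V''$, or an uncancelled $\int V'\,\Re(i\bar f\pa_y f)$) are left over: one must verify that $\Re\lan if, V'\pa_y f\ran + \Re\lan iV' f,\pa_y f\ran$ actually telescopes (it equals $\Re\int iV'(\bar f\pa_y f - f\overline{\pa_y f})\,dy$ ... wait, signs) — in fact $\Re\lan if,V'\pa_y f\ran = \Re\int iV' f\,\overline{\pa_y f}$ and $\Re\lan iV'f,\pa_y f\ran = \Re\int iV'f\,\overline{\pa_y f}$, so these add rather than cancel and one integrates by parts to move a derivative onto $V'$, producing a $V''$-term; this term is lower order and absorbed into the $\al\psi\ep^{2/3}\nu\|\pa_{yy}f\|_2^2$ and $\tfrac{\beta^2}{\al}\psi^3\nu\|\pa_y f\|_2^2$ reservoirs using $\|V''\|_\infty\le C$ from \eqref{V_y} together with $\psi^2\ep^{1/3}|k| = \psi^2\nu^{1/3}|k|^{2/3}$ and Young. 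Collecting Steps~1--3 gives \eqref{monotone beta estimate}.
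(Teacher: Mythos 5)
Your Steps 1 and 2 are correct and coincide with the paper's treatment of the terms $T_{\beta;1}$ (the $\psi\psi'$ term) and the viscous cross terms, which after one integration by parts reduce to $-2\beta\psi^2\ep^{1/3}\nu\,\Re\int i\pa_yf\,\overline{\pa_{yy}f}\,dy$ and Young-split exactly as you describe. The problem is in Step 3, precisely at the place you flag as uncertain. Your first account of the transport terms is the right one: writing $T_\beta$'s transport contribution as $\Re\lan i(-ikVf),\pa_yf\ran+\Re\lan if,-ik\pa_yV\,f\ran+\Re\lan if,-ikV\pa_yf\ran$, the first and third pieces are $+k\Re\int Vf\overline{\pa_yf}\,dy$ and $-k\Re\int Vf\overline{\pa_yf}\,dy$ and cancel identically, while the commutator piece is computed \emph{directly}, with no integration by parts, as
\begin{align}
\Re\int if\,\overline{(-ik\pa_yV f)}\,dy=\Re\int (i)(ik)\,\pa_yV\,|f|^2\,dy=-k\int \pa_yV\,|f|^2\,dy,
\end{align}
which is the full negative term with coefficient exactly $-\beta\psi^2\ep^{1/3}|k|$. (In the paper's bookkeeping the same total arises as $-\tfrac{k}{2}\int\pa_yV|f|^2$ from $T_{\beta;2}$ plus $-\tfrac{k}{2}\int\pa_yV|f|^2$ net from $T_{\beta;3}$.) There is no pair of terms $\Re\lan if,V'\pa_yf\ran+\Re\lan iV'f,\pa_yf\ran$ anywhere in the computation — $V'$ only ever lands on $f$ in the second slot — so no integration by parts onto $V'$ is needed and no $V''$ term appears.

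This matters because your proposed fallback would not work: a leftover term of size $\beta\psi^2\ep^{1/3}|k|\,\|V''\|_\infty\|f\|_2^2=\beta\psi^2\nu^{1/3}|k|^{2/3}\|V''\|_\infty\|f\|_2^2$ is \emph{not} lower order relative to the good terms and cannot be absorbed into the reservoirs $\al\psi\ep^{2/3}\nu\|\pa_{yy}f\|_2^2$ or $\tfrac{\beta^2}{\al}\psi^3\nu\|\pa_yf\|_2^2$ (wrong power of $\nu$ and no derivatives on $f$); the only way to absorb it would be against the negative term via $\pa_yV\geq\mathfrak c$, which would degrade its coefficient below the full $-\beta$ demanded by \eqref{monotone beta estimate}. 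So the lemma as stated is only reachable by the clean cancellation above. Redo the sign bookkeeping carefully — with $\pa_tf=\nu\pa_{yy}f-ikVf$, $k\geq1$ and $\pa_yV>0$ from \eqref{WLOG}, the three transport pieces telescope to exactly $-k\int\pa_yV|f|^2\,dy$ and the proof closes; as written, your Step 3 does not establish the estimate.
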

We are ready to prove Theorem \ref{thm_mon} with these estimates.
\begin{proof}[Proof of Theorem~\ref{thm_mon}]If $T\leq 2\nu^{-1/3}\myr{|k|^{-2/3}}$, then standard $L^2$-energy estimate yields \eqref{mon_ED_main}. Hence, we assume $T>2\nu^{-1/3}\myr{|k|^{-2/3}}$ without loss of generality. We distinguish between two time intervals, i.e., 
\begin{align}\label{I1_I2}
    \mathcal{I}_1=[0, \nu^{-1/3}\myr{|k|^{-2/3}}], \quad \mathcal{I}_2=[\nu^{-1/3}\myr{|k|^{-2/3}},T].
\end{align} We organize the proof in three steps. 

\noindent
{\bf Step \# 1: Energy bounds.}
Combining the estimates \eqref{T_L2}, \eqref{monotone al estimate}, \eqref{monotone beta estimate}, we obtain that 
    \begin{align}
        \frac{d}{dt}\cF[f]\leq&\al\ep^{2/3}\nu^{1/3}\myr{|k|^{2/3}}\mathbbm{1}_{[0,\nu^{-1/3}\myr{|k|^{-2/3}}]}(t)\|\pa_y f\|_2^2+\frac{\beta}{\sqrt{\al}} \nu^{1/3}\myr{|k|^{2/3}}\mathbbm{1}_{[0,\nu^{-1/3}\myr{|k|^{-2/3}}]}(t) \lf(\|f\|_2^2+ \al\ep^{2/3} \psi\|\pa_yf\|_2^2\rg)\\
& - 2\nu\|\pa_yf\|_2^2-2\al\psi\ep^{2/3}\nu\|\pa_{yy}f\|_2^2+\frac{\beta}{2} \psi^2\ep^{1/3}|k|\lf\|\sqrt{|\pa_yV|}f\rg\|_2^2+  \frac{2\al^2}{\beta} {\nu} \|\pa_yV\|_\infty \|\pa_yf\|_2^2\\
        &+{\al}\psi\ep^{2/3}\nu\| \pa_{yy}f\|_2^2+\frac{ \beta^2}{\al}\psi^3\nu\|\pa_y f\|_2^2 -  {\beta}\psi^2\ep^{1/3}|k|\lf\|  \sqrt{|\pa_yV|}|f|\rg\|_{2}^2\\
        \leq&\frac{\beta}{\sqrt{\al}} \nu^{1/3}\myr{|k|^{2/3}}\mathbbm{1}_{[0,\nu^{-1/3}\myr{|k|^{-2/3}}]}(t) \lf(\|f\|_2^2+ \al\ep^{2/3} \psi\|\pa_yf\|_2^2\rg) \\
        &- \nu\lf(2-\al\mathbbm{1}_{[0,\nu^{-1/3}\myr{|k|^{-2/3}}]}(t) - \frac{2\al^2}{\beta  }\|\pa_yV\|_\infty - \frac{\beta^2}{\al}\psi^3\rg)\|\pa_yf\|_2^2-\frac{ 1}{2}\beta\ep^{1/3}\psi^2|k| \lf\|  \sqrt{|\pa_yV|}f\rg\|_2^2 .
    \end{align}
Now we choose the $\al,\beta $ as follows:
\begin{align}\label{chc_albe_mon}
    \al=\beta=\frac{1}{2(1+\|\pa_y V\|_\infty)}.
\end{align}
Then we check that the condition \eqref{monotone bound req} and the following hold for all $t\in[0,T]$,
    \begin{align}
        2 -\al\mathbbm{1}_{[0,\nu^{-1/3}]}(t) - \frac{2\al^2}{\beta  }\|\pa_yV\|_\infty - \frac{\beta^2}{\al}\psi^3\geq 2 -\frac{1}{2(1+\|\pa_yV\|_\infty)}-\frac{\|\pa_yV\|_\infty}{1+\|\pa_yV\|_\infty}-\frac{1}{2(1+\|\pa_yV\|_\infty)}\geq {1}.
   \end{align}
    As a result, we have \eqref{equiv_mono} and the following,
    \begin{align}
        \frac{d}{dt}\cF[f]\leq&\frac{\beta}{\sqrt{\al}} \nu^{1/3}\myr{|k|^{2/3}}\mathbbm{1}_{[0,\nu^{-1/3}\myr{|k|^{-2/3}}]}(t) \lf(\|f\|_2^2+ \al\ep^{2/3} \psi\|\pa_yf\|_2^2\rg) -{\nu}\|\pa_y f\|_2^2 - \frac{\beta\ep^{1/3}|k|}{2}\psi^2\lf\|\sqrt{|\pa_yV|}f\rg\|_2^2. \\ \label{energy}
    \end{align}

    \noindent 
    {\bf Step \# 2: Initial time layer estimate.}
Thanks to the estimate \eqref{energy} and the equivalence \eqref{equiv_mono}, we have that 
\begin{align}
    \frac{d}{dt}\mathcal{F}[f](t)\leq 2\frac{\beta}{\sqrt{\al}}\nu^{1/3}\myr{|k|^{2/3}}\mathcal{F}[f](t)=\frac{\sqrt{2}}{(1+\|\pa_y V\|_\infty)^{1/2}}\nu^{1/3}\myr{|k|^{2/3}}\mathcal{F}[f](t),\quad \mathcal{F}[f](t=0)=\|f_{0;k}\|_2^2.
\end{align}
By solving this differential inequality, we have that 
\begin{align}\label{Step_2_mono}
    \mathcal{F}[f](t)\leq \exp\lf\{\frac{\sqrt{2}}{(1+\|\pa_y V\|_\infty)^{1/2}}\rg\}\|f_{0;k}\|_2^2,\quad \forall t\in[0, \nu^{-1/3}\myr{|k|^{-2/3}}].
\end{align}

  \noindent 
    {\bf Step \# 3: Long time estimate.}
Now, we focus on the long time interval $\mathcal{I}_2$. On this interval, we have that $\psi\equiv 1$. The estimate \eqref{energy}, together with the lower bound on $|\pa_y V|$ \eqref{V_y}, the choice of $\beta$ \eqref{chc_albe_mon} yields that 
\begin{align}
     \frac{d}{dt}\mathcal{F}[f](t)\leq &-\nu\|\pa_y f\|_2^2-\frac{\nu^{1/3}|k|^{2/3}}{4(1+\|\pa_y V\|_\infty)}\lf\|\sqrt{|\pa_y V|}f\rg\|_2^2\leq -\frac{ \nu^{1/3}|k|^{2/3}}{4(1+\mathfrak c)(1+\|\pa_y V\|_\infty)}(\|f\|_{2}^2+\al\ep^{2/3}\|\pa_y f\|_2^2)\\
     \leq & -\frac{ \nu^{1/3}|k|^{2/3}}{6(1+\mathfrak c)(1+\|\pa_y V\|_\infty)}\mathcal{F}[f](t).
\end{align}
In the last line, we invoked the equivalence \eqref{equiv_mono}.
Hence, for all $t\in[\nu^{-1/3}\myr{|k|^{-2/3}}, T]$ 
\begin{align}
     \mathcal{F}[f](t) \leq  \mathcal{F}[f](t=\nu^{-1/3}\myr{|k|^{-2/3}})\exp\lf\{-\delta\nu^{1/3}\myr{|k|^{2/3}}(t-\nu^{-1/3}\myr{|k|^{-2/3}})\rg\},\quad \delta:=\frac{1}{6(1+\mathfrak c)(1+\|\pa_y V\|_\infty)}.\\ \label{delta_mono}
\end{align}
Thanks to the relation \eqref{Step_2_mono}, we have that 
\begin{align}
    \mathcal{F}[f_k](t) \leq e \|f_{0;k}\|_2^2\exp\lf\{-\delta\nu^{1/3}|k|^{2/3}  t \rg\},\quad \forall t\in[\nu^{-1/3}\myr{|k|^{-2/3}}, T].
\end{align}
This concludes the proof of \eqref{Hypo_est_mon} and Theorem \ref{thm_mon}.
\ifx
    Define $\al$ such that $\beta c^{-1} = 2\frac{\al^2}{\beta c}\|\pa_yV\|_\infty^2$, $\al = \frac{\sqrt{2}\beta}{\|\pa_yV\|_\infty}$.
    \begin{align}
        \frac{d}{dt}\cF[f]\leq& -\ep^{1/3}\psi^2\frac{\beta c}{3}\|f\|_2^2 - \ep\lf(\frac{1}{2} - 3\beta{c^{-1}}\rg)\|\pa_yf\|_2^2
    \end{align}
    Now choose $\beta$ small enough that the following relation holds
    \begin{align}
        \frac{1}{2} - 3\beta{c^{-1}} \geq \frac{\beta c}{3}
    \end{align}
    So that
    \begin{align}
        \frac{d}{dt}\cF[f]\leq& -\frac{\beta c}{3}\ep^{1/3}\lf(\psi^2\|f\|_2^2 + \ep^{2/3}\|\pa_yf\|_2^2\rg)
    \end{align}
    Consider the domain $t\in[0, \ep^{-1/3}]$. The first estimate is shown by noting that at $t=0$, $\cF[f(0)]=\frac{1}{2}\|f(0)\|_2^2$. Furthermore, the derivative is non-positive, so in this domain
    \begin{align}
        \cF[f(t)] \leq \frac{1}{2}\|f(0)\|_2^2
    \end{align}
    Consider the domain $t\geq \ep^{-1/3}$. Then $\min\{\ep^{1/3}t,1\} = 1$ so the derivative estimate reduces to
    \begin{align}
        \frac{d}{dt}\cF[f]\leq& -\frac{\beta c}{3}\ep^{1/3}\lf(\|f\|_2^2 + \ep^{2/3}\|\pa_yf\|_2^2\rg)
    \end{align}
    And using~\ref{equiv_mono}, we have
    \begin{align}
        \frac{d}{dt}\cF[f]\leq& -C\ep^{1/3}\lf(\cF[f]\rg)
    \end{align}
    The inequality solution to which is
    \begin{align}
        \cF[f(t)] \leq& \cF[f(\ep^{-1/3})]e^{-C\ep^{1/3}\lf(t-\ep^{-1/3}\rg)}\\
        =& \cF[f(\ep^{-1/3})]\lf(e^C\rg)e^{-C\ep^{1/3}t}
    \end{align}
    Now the two domain inequalities can be combined at $t=\ep^{-1/3}$ to form an inequality which is valid for $t\geq\ep^{-1/3}$.
    \begin{align}
        \cF[f(t)] \leq& \frac{1}{2}\lf(e^{C}\rg)\|f(0)\|_2^2e^{-C\ep^{1/3}t}
    \end{align}
    Therefore there is some larger constant $C(\al, \beta, \|\pa_yV\|_\infty^2)$ so that the inequality holds for $t\in[0, \ep^{-1/3}]$.
    \begin{align}
        \cF[f(t)] \leq& C\|f(0)\|_2^2e^{-\delta\ep^{1/3}t}
    \end{align}
    where $\delta\sim\beta$ is small.\fi
\end{proof}

Finally, we collect the proofs of the technical lemmas.
\begin{proof}[Proof of Lemma~\ref{lem:monotone al}]
    We recall the definition of $T_\al$ \eqref{T_albe_term}. Invoking the equation  \eqref{k_by_k_eq} and integration by parts yields that 
    \begin{align}
        T_\al=& \al\psi' \ep^{2/3}\|\pa_yf\|_2^2 + \al\psi\ep^{2/3}\frac{d}{dt}\|\pa_y f\|_2^2 
         =  \al\psi' \ep^{2/3}\|\pa_yf\|_2^2 + 2\al\psi\ep^{2/3}\Re\int \pa_y(\nu\pa_{y}^2f - ikVf) \overline{\pa_y f}dy\\
         =& \al\psi' \ep^{2/3}\|\pa_yf\|_2^2 + 2\al\psi\ep^{2/3}\Re\int\lf(\nu \pa_{y}^3f- ik\pa_yVf - ikV\pa_yf\rg)\overline{\pa_y f}dy\\
         =&\al\psi' \ep^{2/3}\|\pa_yf\|_2^2 + 2\al\psi\ep^{2/3}\lf(\nu\Re\int\pa_{y}^3f\overline{\pa_yf}dy - \Re\int ik\pa_yVf\overline{\pa_yf}dy - \Re\int ikV\pa_yf\overline{\pa_yf}dy\rg)\\
         =& \al\psi' \ep^{2/3}\|\pa_yf\|_2^2 - 2\al\psi\ep^{2/3}\lf( \nu\|\pa_{y}^2f\|_2^2 + \Re\int ik\pa_yVf\overline{\pa_yf}dy\rg)\\
         \leq& \al\psi' \ep^{2/3}\|\pa_yf\|_2^2 - 2\al\psi\ep^{2/3}\nu\|\pa_{y}^2f\|_2^2 + 2\al\psi\ep^{2/3}|k|\|\pa_yV\|_{\infty}^{1/2}\|\sqrt{|\pa_yV|} f\|_2\|\pa_yf\|_2.
    \end{align}
    An application of Young's inequality yields \eqref{monotone al estimate}.
\end{proof}
\begin{proof}[Proof of Lemma~\ref{lem:monotone beta}] The estimate of the $T_\beta$ term in \eqref{T_albe_term} is technical. Hence, we further decompose it into three terms:
	\begin{align}
	T_\beta =& 2\beta \psi \psi'\ep^{1/3}\Re\langle i f,\pa_y f\rangle + \beta\psi^2\ep^{1/3} \Re\int i\pa_tf\overline{\pa_yf}dy +  \beta\psi^2\ep^{1/3}\Re\int if\overline{\pa_{yt}f}dy=:T_{\beta;1}+T_{\beta;2}+T_{\beta;3}. \label{T_beta123}
 \end{align}
 We estimate these terms one by one. To begin with, we have the following bound for the $T_{\beta;1}$:
 \begin{align}
|T_{\beta;1}|\leq& 2\frac{\beta}{\sqrt{\al}} \nu^{1/3}\myr{|k|^{2/3}}\mathbbm{1}_{[0,\nu^{-1/3}\myr{|k|^{-2/3}}]}(t)\sqrt{\psi} \|f\|_2 (\sqrt{\al}\ep^{1/3} \sqrt{\psi}\|\pa_yf\|_2) \\
\leq& \frac{\beta}{\sqrt{\al}} \nu^{1/3}\myr{|k|^{2/3}}\mathbbm{1}_{[0,\nu^{-1/3}\myr{|k|^{-2/3}}]}(t) \lf(\|f\|_2^2+ \al\ep^{2/3} \psi\|\pa_yf\|_2^2\rg).   \label{T_beta1}
 \end{align}
 Next we compute the term $T_{\beta;2}$ using the equation \eqref{k_by_k_eq}and the assumption $\pa_y V>0$ \eqref{WLOG}:
 \begin{align}
	T_{\beta;2}=&  \beta\psi^2\ep^{1/3}\Re\int i\lf(\nu\pa_{yy}f - iVf\rg)\overline{\pa_yf}dy 
		=   \beta\psi^2\ep^{1/3}\lf(\nu\Re\int i\pa_{yy}f\overline{\pa_yf}dy + k\Re\int V \pa_y \lf(\frac{|f|^2}{2}\rg)dy \rg)\\
		=& \beta\psi^2\ep^{1/3}\lf(\nu\Re\int i\pa_{yy}f\overline{\pa_yf}dy - \frac{k}{2}\Re\int  |f|^2\pa_yV dy\rg). \label{T_beta_2}
	\end{align}
Finally, we focus on the $T_{\beta;3}$ term in \eqref{T_beta123}. Recalling that $0\leq \pa_yV\in \mathbb R$, we have that 
  \begin{align}
	T_{\beta;3}=&   \beta\psi^2\ep^{1/3}  \Re\int if\overline{\lf(\nu\pa_{y}^3f- ik\pa_yVf - ikV\pa_yf\rg)}dy \\
		=& \beta\psi^2\ep^{1/3}\lf(-\nu\Re\int i\pa_y f\overline{\pa_{y}^2f}dy- k\Re\int f\overline{\pa_yVf}dy - k\Re\int V \pa_y \lf(\frac{|f|^2}{2}\rg)dy\rg)\\
  =& -\beta\psi^2\ep^{1/3}\nu\Re\int i\pa_y f\overline{\pa_{y}^2f}dy- \frac{\beta}{2}\psi^2\ep^{1/3}k\Re\int |f|^2 \pa_yV dy.\label{T_beta3}
\end{align}
Combining the estimates \eqref{T_beta1}, \eqref{T_beta_2}, \eqref{T_beta3}, we have that 
\begin{align}
    T_\beta\leq& \frac{\beta}{\sqrt{\al}} \nu^{1/3}\myr{|k|^{2/3}}\mathbbm{1}_{[0,\nu^{-1/3}\myr{|k|^{-2/3}}]}(t) \lf(\|f\|_2^2+ \al\ep^{2/3} \psi\|\pa_yf\|_2^2\rg)-2\beta\psi^2\ep^{1/3}\nu\Re\int i\pa_y f\overline{\pa_{y}^2f}dy\\
    &-  {\beta}\psi^2\ep^{1/3}|k|\lf\|  \sqrt{|\pa_yV|}|f|\rg\|_{2}^2\\
    \leq & \frac{\beta}{\sqrt{\al}} \nu^{1/3}\myr{|k|^{2/3}}\mathbbm{1}_{[0,\nu^{-1/3}\myr{|k|^{-2/3}}]}(t) \lf(\|f\|_2^2+ \al\ep^{2/3} \psi\|\pa_yf\|_2^2\rg)+\frac{\beta^2}{\al}\psi^3\nu\|\pa_y f\|_2^2 +{\al}{}\psi\ep^{2/3}\nu\| \pa_{yy}f\|_2^2\\
    &-  {\beta}\psi^2\ep^{1/3}|k|\lf\|  \sqrt{|\pa_yV|}|f|\rg\|_{2}^2.
\end{align}
\ifx
  \begin{align}
		=& 2\beta\psi\ep^{2/3}\|f\|_2\|\pa_yf\|_2\\
        +& \beta\psi^2\ep^{1/3}\lf(\ep\Re\int i\pa_{yy}f\overline{\pa_yf}dy - 2\ep\Re\int if\overline{\pa_yf}dy + \ep\Re\int if\overline{\pa_{yyy}f}dy - \Re\int f\overline{\pa_yVf}dy\rg)\\
		=& 2\beta\psi\ep^{2/3}\|f\|_2\|\pa_yf\|_2\\
        +& \beta\psi^2\ep^{1/3}\lf(\ep\Re\int i\pa_{yy}f\overline{\pa_yf}dy - 2\ep\Re\int if\overline{\pa_yf}dy - \ep\Re\int i\pa_yf\overline{\pa_{yy}f}dy - \Re\int f\overline{\pa_yVf}dy\rg)\\
		=& 2\beta\psi\ep^{2/3}\|f\|_2\|\pa_yf\|_2 + \beta\psi^2\ep^{1/3}\lf(-2\ep\Re\int if\overline{\pa_yf}dy - \Re\int f\overline{V\pa_yf}dy\rg)\\
		\leq& 2\beta\psi\ep^{2/3}\|f\|_2\|\pa_yf\|_2 + 2\beta\psi^2\ep^{4/3}\|f\|_2\|\pa_yf\|_2 - \psi^2\|\sqrt{\pa_yV}f\|_2^2\\
        \leq& \frac{\beta c}{4}\psi^2\ep^{1/3}\|f\|_2^2 + 2\beta\ep{c^{-1}}\|\pa_yf\|_2^2 + 2\beta\psi^2\ep^{4/3}\|f\|_2\|\pa_yf\|_2 - \psi^2\|\sqrt{\pa_yV}f\|_2^2\\
		\leq& \frac{\beta c}{4}\beta\psi^2\ep^{1/3}\|f\|_2^2 + 2\beta\ep{c^{-1}}\|\pa_yf\|_2^2\\
        +& \beta\psi^2\ep^{4/3}\|f\|_2^2 + \beta\psi^2\ep^{4/3}\|\pa_yf\|_2^2 - \beta\psi^2\ep^{1/3}\|\sqrt{\pa_yV}f\|_2^2\\
        \leq& \frac{\beta c}{4}\beta\psi^2\ep^{1/3}\|f\|_2^2 + 2\beta\ep{c^{-1}}\|\pa_yf\|_2^2\\
        +& \beta\psi^2\ep^{4/3}\|f\|_2^2 + \beta\ep^{4/3}\|\pa_yf\|_2^2 - \beta\psi^2\ep^{1/3}\|\sqrt{\pa_yV}f\|_2^2\\
		\leq& \frac{\beta c}{4}\psi^2\ep^{1/3}\|f\|_2^2 + 2\beta\ep{c^{-1}}\|\pa_yf\|_2^2\\
        +& \beta\psi^2\ep^{4/3}\|f\|_2^2 + \beta\ep^{4/3}\|\pa_yf\|_2^2 - \beta\psi^2\ep^{1/3}\min_{y,t}(\sqrt{\pa_yV})^2\|f\|_2^2\\
        \leq& \frac{\beta c}{4}\psi^2\ep^{1/3}\|f\|_2^2 + 2\beta\ep{c^{-1}}\|\pa_yf\|_2^2\\
        +& \beta\psi^2\ep^{4/3}\|f\|_2^2 + \beta\ep^{4/3}\|\pa_yf\|_2^2 - \beta c\psi^2\ep^{1/3}\|f\|_2^2\\
        \leq& -\ep^{1/3}\psi^2\lf(\frac{3\beta c}{4} - \beta\ep\rg)\|f\|_2^2 + \ep\lf(2\beta{c^{-1}} + \beta\ep^{1/3}\rg)\|\pa_yf\|_2^2
	\end{align}
 \fi
\end{proof}

\ifx
We consider the following passive scalar equation in the channel $\T\times\R$ or $\T\times[0,1]$
\begin{align}
    \pa_t f_{\neq}+ V(t,y) \pa_x f_{\neq}=\ep\Delta f_{\neq},\quad f_{\neq}(t=0)=f_{\text{in};\neq}.
\end{align}
Assume that we are on $\Torus \times \rr$ and $f_\nq$ decays to zero fast enough near $y=\infty$. The time-dependent shear flow $V$ is strictly monotone.
With monotone flow, we will assume that 
\begin{align}\label{V_y assumption}
    \min_{t\in \rr, y\in \rr}|\pa_yV|\geq c>0.
\end{align}
We take the Fourier transform in the $x$-variable to obtain the following
\begin{align*}
    \pa_t \wh f_k+V(t,y)ik \wh f_k=\ep \pa_{yy}\wh f_k-\ep|k|^2 \wh f_k.
\end{align*}
\myr{It turns out that we can focus on the $k=1$ case:
\begin{align}
    \pa_t \wh f_1+V(t,y)i \wh f_1=\ep \pa_{yy}\wh f_1-\ep \wh f_1.
\end{align}}
Define the hypocoercivity functional as
\begin{align}
    \label{monotone fnc}
    \cF[f]=\frac{1}{2}\|f\|_2^2+\al\psi\lf(\frac{\ep}{|k|}\rg)^{2/3}\|\pa_{y}f\|_2^2+\beta\psi^2\lf(\frac{\ep}{|k|}\rg)^{1/3}\Re\langle i k f,\pa_y f\rangle.
\end{align}
Here $\al, \beta$ are three parameters to be chosen. The following equivalence relation holds.

\begin{thm}
    With the previous lemmas, there are $\al$ and $\beta$ such that:
    For $t\in[0, \ep^{-1/3}]$, 
    \begin{align}
        \cF[f(t)] \lesssim \|f(0)\|_2^2
    \end{align}
    For $t>\ep^{-1/3}$,
    \begin{align}
        \cF[f(t)] \lesssim \cF[f(\ep^{-1/3})]e^{-\delta\ep^{1/3}\lf(t-\ep^{-1/3}\rg)}
    \end{align}
    Moreover, the following estimate holds on the whole time interval.
    \begin{align}
        \cF[f(t)] \leq C\|f(0)\|_2^2e^{-\delta\ep^{1/3}t}
    \end{align}
\end{thm}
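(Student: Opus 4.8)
The plan is to realize $\cF[f]$ as a supersolution of a scalar ODE by assembling the three ingredients already isolated above: the $L^2$ identity $T_{L^2}=-2\nu\|\pa_y f\|_2^2$, the $\al$-estimate of Lemma~\ref{lem:monotone al}, and the $\beta$-estimate of Lemma~\ref{lem:monotone beta}. Adding these, the only terms of uncertain sign in the bound for $\tfrac{d}{dt}\cF[f]$ are a multiple of $\nu\|\pa_y f\|_2^2$, whose coefficient is a combination of $\al,\beta,\|\pa_y V\|_\infty$ and the cubic weight $\psi^3\le 1$, and the localized term $\tfrac{\beta}{\sqrt\al}\nu^{1/3}|k|^{2/3}\mathbbm{1}_{[0,\nu^{-1/3}|k|^{-2/3}]}(t)\,(\|f\|_2^2+\al\ep^{2/3}\psi\|\pa_y f\|_2^2)$ coming from $\psi'$ in $T_\al$ and $T_{\beta;1}$. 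The first step is to fix $\al=\beta=\tfrac{1}{2(1+\|\pa_y V\|_\infty)}$; this single choice makes $\al>\beta^2$, so Lemma~\ref{lem:monotone fnc} gives the equivalence $\cF[f]\approx\|f\|_2^2+\al\ep^{2/3}\psi\|\pa_y f\|_2^2$, and at the same time keeps the coefficient of $-\nu\|\pa_y f\|_2^2$ at least $1$. This yields the master inequality
\[
\tfrac{d}{dt}\cF[f]\le \tfrac{\beta}{\sqrt\al}\nu^{1/3}|k|^{2/3}\mathbbm{1}_{[0,\nu^{-1/3}|k|^{-2/3}]}(t)\bigl(\|f\|_2^2+\al\ep^{2/3}\psi\|\pa_y f\|_2^2\bigr)-\nu\|\pa_y f\|_2^2-\tfrac{\beta}{2}\ep^{1/3}|k|\psi^2\bigl\|\sqrt{|\pa_y V|}\,f\bigr\|_2^2 .
\]

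Next I split the interval at $t_\ast:=\nu^{-1/3}|k|^{-2/3}$. On $\mathcal I_1=[0,t_\ast]$ I drop the two good negative terms and use the equivalence to bound the right side by $2\tfrac{\beta}{\sqrt\al}\nu^{1/3}|k|^{2/3}\cF[f]$; integrating over an interval of length $t_\ast$ produces only an $O(1)$ exponent, so $\cF[f](t)\le e\,\|f(0)\|_2^2$ for $t\in\mathcal I_1$ once one checks $\sqrt2(1+\|\pa_y V\|_\infty)^{-1/2}\le1$. On $\mathcal I_2=[t_\ast,T]$ the indicator vanishes and $\psi\equiv1$, so the master inequality reduces to $\tfrac{d}{dt}\cF[f]\le-\nu\|\pa_y f\|_2^2-\tfrac{\beta}{2}\nu^{1/3}|k|^{2/3}\|\sqrt{|\pa_y V|}\,f\|_2^2$; inserting the uniform lower bound $|\pa_y V|\ge\mathfrak c$ from \eqref{V_y} and the equivalence of Lemma~\ref{lem:monotone fnc} again converts this to $\tfrac{d}{dt}\cF[f]\le-\delta\nu^{1/3}|k|^{2/3}\cF[f]$ with $\delta=\tfrac{1}{6(1+\mathfrak c)(1+\|\pa_y V\|_\infty)}$, and Gr\"onwall gives $\cF[f](t)\le\cF[f](t_\ast)e^{-\delta\nu^{1/3}|k|^{2/3}(t-t_\ast)}$ there.

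Finally I chain the two regimes at $t_\ast$: feeding $\cF[f](t_\ast)\le e\|f(0)\|_2^2$ into the $\mathcal I_2$ estimate and using $e^{\delta\nu^{1/3}|k|^{2/3}t_\ast}=e^{\delta}\le e$ to absorb the shift gives $\cF[f](t)\le C\|f(0)\|_2^2 e^{-\delta\nu^{1/3}|k|^{2/3}t}$ on all of $[0,T]$, and the pointwise bound $\|f(t)\|_2^2\le\cF[f](t)$ closes the argument; the two intermediate-time statements in the theorem are simply the two halves of this dichotomy. Granting the three lemmas, what remains is essentially bookkeeping, and I regard two observations rather than any single estimate as the crux: the joint requirement that $\al>\beta^2$ (coercivity) and that the coefficient of $-\nu\|\pa_y f\|_2^2$ stay nonnegative is exactly what forces $\al\approx\beta\approx(1+\|\pa_y V\|_\infty)^{-1}$, and the fact that $\psi'$ is supported only on $\mathcal I_1$ is what confines the growth phase to a window of length $O(\nu^{-1/3}|k|^{-2/3})$, so that the final prefactor is a universal constant rather than a $\nu$-dependent one. (If one wishes to trace the third-derivative cross terms $\nu\,\Re\!\int i\pa_y f\,\overline{\pa_{yy}f}\,dy$ generated inside Lemmas~\ref{lem:monotone al}--\ref{lem:monotone beta}, they are reabsorbed by Young's inequality into the single reservoir $-2\al\psi\ep^{2/3}\nu\|\pa_{yy}f\|_2^2$ of the $\al$-term, and checking that this reservoir is not overspent is the one place where the relative size of $\al$ and $\beta$ genuinely matters.)
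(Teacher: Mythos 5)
Your argument is correct and is essentially identical to the paper's proof of Theorem \ref{thm_mon}: the same choice $\al=\beta=\tfrac{1}{2(1+\|\pa_y V\|_\infty)}$, the same master differential inequality, the same split at $t_\ast=\nu^{-1/3}|k|^{-2/3}$ with a Gr\"onwall bound on the initial layer and the coercive decay on $\mathcal I_2$ via \eqref{V_y} and the equivalence \eqref{equiv_mono}, yielding the same $\delta$. (Your side remark that $\sqrt2(1+\|\pa_yV\|_\infty)^{-1/2}\le 1$ need not hold, but since the conclusion only asks for an unspecified constant $C$ this is harmless, and the paper's own write-up has the same cosmetic issue.)
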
\fi
    \section{Enhanced Dissipation: Nondegenerate Shear Flows}\label{nondegenerate}
     In this section, we prove the estimate~\eqref{ndeg_ED_main} for the hypoellitic passive scalar equation $\eqref{k_by_k_eq}_{\sigma=0}$. 
Without loss of generality, we assume that $k \geq 1.$ Let us start with a lemma. 
\begin{lem}
    Consider the flow $V(t,y)$ and the reference flow $U(t,y)$ as in Theorem \ref{ndeg_main_thm}.  There exists a constant $C_\ast(\mathfrak {C}_0,\mathfrak {C}_1)> 1$ such that the following estimate holds
    \begin{align}\label{equiv_UV}
        C_\ast^{-1}|\pa_yU(t,y)|\leq |\pa_y V(t,y)|\leq C_\ast  |\pa_yU(t,y)|,\quad \forall  y\in \Torus,\quad\forall t\in[0,T]. 
    \end{align}
    \end{lem}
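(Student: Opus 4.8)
The plan is to argue pointwise in $(t,y)$, splitting $\Torus$ into the union of the critical-point neighborhoods $\cup_{i=1}^N B_r(y_i(t))$ and its complement, and applying the shape assumptions \eqref{asmp_1}--\eqref{asmpt2} to \emph{both} $Z=V$ and $Z=U$ in each region. Since all constants involved are uniform in $t\in[0,T]$, the resulting bound will be uniform in time as well.

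First, fix $t\in[0,T]$ and $y\in\Torus$ and suppose $y\in B_r(y_i(t))$ for some index $i$; by the pairwise disjointness of the neighborhoods this $i$ is unique, though uniqueness is not even needed. Then \eqref{asmp_1} with $Z=V$ gives $|\pa_y V(t,y)|^2\leq \mathfrak{C}_0\,(y-y_i(t))^2$, while \eqref{asmp_1} with $Z=U$ gives $(y-y_i(t))^2\leq \mathfrak{C}_0\,|\pa_y U(t,y)|^2$. Chaining these two bounds cancels the factor $(y-y_i(t))^2$ and yields $|\pa_y V(t,y)|^2\leq \mathfrak{C}_0^2\,|\pa_y U(t,y)|^2$, i.e. $|\pa_y V(t,y)|\leq \mathfrak{C}_0\,|\pa_y U(t,y)|$; interchanging the roles of $U$ and $V$ gives the reverse inequality. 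If instead $y\notin\cup_{i=1}^N B_r(y_i(t))$, then \eqref{asmpt2} applied to $V$ and to $U$ gives $|\pa_y V(t,y)|\leq \mathfrak{C}_1$ and $\mathfrak{C}_1^{-1}\leq |\pa_y U(t,y)|$, hence $|\pa_y V(t,y)|\leq \mathfrak{C}_1^2\,|\pa_y U(t,y)|$, and symmetrically in the other direction.

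Finally, taking $C_\ast:=\max\{\mathfrak{C}_0,\mathfrak{C}_1^2\}$, which exceeds $1$ because $\mathfrak{C}_0,\mathfrak{C}_1>1$, establishes \eqref{equiv_UV} for every $y\in\Torus$ and every $t\in[0,T]$. There is essentially no analytic obstacle in this lemma; the only points to keep straight are that the two inequalities in \eqref{asmp_1} must be used in \emph{opposite} directions for $V$ and $U$ so that the common quadratic factor $(y-y_i(t))^2$ cancels, and that the case split is exhaustive since every point of $\Torus$ lies either inside one of the $B_r(y_i(t))$ or in their common complement.
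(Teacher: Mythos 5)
Your proof is correct and follows essentially the same route as the paper: a case split between the critical-point neighborhoods and their complement, chaining the two inequalities of \eqref{asmp_1} in opposite directions for $V$ and $U$ to cancel the factor $(y-y_i(t))^2$, and using \eqref{asmpt2} directly outside. The explicit choice $C_\ast=\max\{\mathfrak{C}_0,\mathfrak{C}_1^2\}$ matches what the paper's argument yields.
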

\begin{proof}
We distinguish between two cases: a) $y\in B_{r}(y_i(t))$; b) $y\in (\cup_{i=1}^N B_{r}(y_i(t)))^c$. If $y\in B_{r}(y_i(t))$, by \eqref{asmp_1},
    \begin{align} 
         |\pa_y V(t,y)|\leq \mathfrak{C}_0^{1/2}|y-y_i(t)|\leq   \mathfrak{C}_0|\pa_yU(t,y)|,\quad |\pa_yU(t,y)|  \leq \mathfrak{C}_0^{1/2}|y-y_i(t)|\leq \mathfrak{C}_0 |\pa_y V(t,y)| . 
    \end{align}
In case b), since $|\pa_y V|,|\pa_yU|\in[\mathfrak{C}_1^{-1},\mathfrak{C}_1]$, the relation \eqref{equiv_UV} is direct.
\end{proof}
\begin{lem}\label{lem:nondegenerate fnc}
    Assume the relation
    \begin{align}\label{ndeg_bnd_req}
	\myr{\beta^2 \leq \al\gamma.}
    \end{align}
    Then, the following equivalence relation concerning the functional $\mathcal{G}$ \eqref{hypo_ndeg} holds
    \begin{align}
        \|f\|_2^2 + \frac{1}{2}\lf(\al\phi\ep^{1/2}\|\pa_yf\|_2^2 + \gamma\phi^3\ep^{-1/2}\|\pa_yUf\|_2^2\rg) \leq \mathcal{G}[f] \leq \|f\|_2^2 + \frac{3}{2}\lf(\al\phi\ep^{1/2}\|\pa_yf\|_2^2 + \gamma\phi^3\ep^{-1/2}\|\pa_yUf\|_2^2\rg).\\\label{equiv_ndg}
    \end{align}
\end{lem}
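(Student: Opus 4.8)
The plan is to follow the proof of Lemma~\ref{lem:monotone fnc} in spirit: the functional $\mathcal{G}$ in \eqref{hypo_ndeg} is the sum of the positive ``diagonal'' quantity $\|f\|_2^2+\al\phi\ep^{1/2}\|\pa_yf\|_2^2+\gamma\phi^3\ep^{-1/2}\|\pa_yUf\|_2^2$ and a single indefinite cross term $\beta\phi^2\Re\lan i\,\text{sign}(k)\pa_yUf,\pa_yf\ran$, so the entire task reduces to showing that the absolute value of this cross term is bounded by one half of the diagonal part. Once that is established, inserting the bound into the definition \eqref{hypo_ndeg}, once with the cross term added and once subtracted, yields the upper and lower bounds in \eqref{equiv_ndg} simultaneously.

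First I would estimate the cross term by Cauchy--Schwarz,
\begin{align}
\abs{\beta\phi^2\Re\lan i\,\text{sign}(k)\pa_yUf,\pa_yf\ran}\leq \beta\phi^2\|\pa_yUf\|_2\|\pa_yf\|_2,
\end{align}
and then rewrite the right-hand side to expose the two diagonal factors, splitting $\phi^2=\phi^{1/2}\cdot\phi^{3/2}$ and $1=\ep^{1/4}\cdot\ep^{-1/4}$:
\begin{align}
\beta\phi^2\|\pa_yUf\|_2\|\pa_yf\|_2=\frac{\beta}{\sqrt{\al\gamma}}\lf(\sqrt{\al}\,\ep^{1/4}\phi^{1/2}\|\pa_yf\|_2\rg)\lf(\sqrt{\gamma}\,\ep^{-1/4}\phi^{3/2}\|\pa_yUf\|_2\rg).
\end{align}
An application of Young's inequality $ab\leq\tfrac12(a^2+b^2)$ then gives
\begin{align}
\abs{\beta\phi^2\Re\lan i\,\text{sign}(k)\pa_yUf,\pa_yf\ran}\leq \frac{\beta}{2\sqrt{\al\gamma}}\lf(\al\phi\ep^{1/2}\|\pa_yf\|_2^2+\gamma\phi^3\ep^{-1/2}\|\pa_yUf\|_2^2\rg).
\end{align}

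The final step is to invoke the hypothesis \eqref{ndeg_bnd_req}, namely $\beta^2\leq\al\gamma$: this is exactly what makes $\beta/\sqrt{\al\gamma}\leq1$, so the prefactor above is at most $\tfrac12$ and the cross term is controlled by half the diagonal part, as required. I expect no genuine obstacle in this lemma; the only point requiring care is the bookkeeping of the exponents of $\phi$ and $\ep$ in the splitting of the cross term — one must check that $\phi^{1/2}\cdot\phi^{3/2}=\phi^2$ and $\ep^{1/4}\cdot\ep^{-1/4}=1$ reproduce the weight $\beta\phi^2$ in \eqref{hypo_ndeg} exactly, which is precisely why the powers $\phi,\phi^2,\phi^3$ and $\ep^{1/2},\ep^{-1/2}$ are assigned as they are in the definition of $\mathcal{G}$. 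At $t=0$ all weighted terms vanish and both sides of \eqref{equiv_ndg} collapse to $\|f\|_2^2$, so no separate argument is needed there.
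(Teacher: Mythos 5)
Your proof is correct and follows essentially the same route as the paper: Cauchy--Schwarz on the cross term, Young's inequality to split it between the two weighted diagonal terms, and the hypothesis $\beta^2\leq\al\gamma$ to make the resulting prefactor at most $\tfrac12$. The only (cosmetic) difference is that you balance Young's inequality symmetrically so each diagonal term absorbs exactly half the cross term, whereas the paper uses the weights $\tfrac{\al}{2}$ and $\tfrac{\beta^2}{2\al}$ and invokes $\beta^2\leq\al\gamma$ only at the end to bound $\tfrac{\beta^2}{2\al}$ by $\tfrac{\gamma}{2}$.
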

\begin{proof}
    We recall the definition of $\mathcal{G}$ \eqref{hypo_ndeg}, and estimate $\mathcal{G}[f]$ using H\"older inequality and Young's inequality,
    \begin{align}
        \mathcal{G}[f] \leq& \|f\|_2^2 + \al\phi\ep^{1/2}\|\pa_yf\|_2^2 + \beta\phi^2\|\pa_yUf\|_2^2\|\pa_yf\|_2^2 + \gamma\phi^3\ep^{-1/2}\|\pa_yUf\|_2^2\\
        \leq& \|f\|_2^2 + \frac{3\al}{2}\phi\ep^{1/2}\|\pa_yf\|_2^2 + \lf(\gamma + \frac{\beta^2}{2\al}\rg)\phi^3\ep^{-1/2}\|\pa_yUf\|_2^2.
    \end{align}
    Similarly, we have the lower bound,
    \begin{align}
        \mathcal{G}[f] \geq \|f\|_2^2 + \frac{\al}{2}\phi\ep^{1/2}\|\pa_yf\|_2^2 + \lf(\gamma - \frac{\beta^2}{2\al}\rg)\phi^3\ep^{-1/2}\|\pa_yUf\|_2^2.
    \end{align}
    Since \eqref{ndeg_bnd_req} implies that $\frac{\beta^2}{2\al} \leq \frac{\gamma}{2}$, we obtain that
    \begin{align}
        \|f\|_2^2 + \frac{1}{2}\al\phi\ep^{1/2}\|\pa_yf\|_2^2 + \frac{1}{2}\gamma\phi^3\ep^{-1/2}\|\pa_yUf\|_2^2 \leq \mathcal{G}[f(t)] \leq \|f\|_2^2 + \frac{3}{2}\al\phi\ep^{1/2}\|\pa_yf\|_2^2 + \frac{3}{2}\gamma\phi^3\ep^{-1/2}\|\pa_yUf\|_2^2.
    \end{align}
    This concludes the proof of the lemma.
\end{proof}
By taking the time derivative of the hypocoercivity functional, \eqref{hypo_mono}, we end up with the following decomposition:
\begin{align}
    \frac{d}{dt}\mathcal{G}[f(t)] =& \frac{d}{dt}\|f\|_2^2 + \al \ep^{1/2}\frac{d}{dt}\lf(\phi\|\pa_{y}f\|_2^2\rg) + \beta\frac{d}{dt}\lf(\phi^2\Re\langle i\pa_yUf,\pa_yf\rangle\rg) + \gamma\ep^{-1/2}\frac{d}{dt}\lf(\phi^3\|\pa_yUf\|_2^2\rg)\\
    =:& \mathbb T_{L^2} + \mathbb T_\al+ \mathbb T_\beta + \mathbb T_\gamma.\label{ndeg_T_albe_term}
\end{align}
The estimates for the $\mathbb T_\al,\ \mathbb T_\beta$, and $\mathbb T_\gamma$ terms are tricky, and we collect them in the following technical lemmas whose proofs will be postponed to the end of this section.
\begin{lem}[$\al$-estimate]\label{lem:nondegenerate al}
	The following estimate holds on the interval $[0,T]$:
	\begin{align}\label{deg_al_est}
            \mathbb{T}_\al \leq \al\nu\lf(1+\frac{4\al}{\beta}C_\ast^3\rg)\|\pa_yf\|_2^2 - 2\al\phi\ep^{1/2}\nu\|\pa_{y}^2f\|_2^2 +  \frac{\beta\phi^2}{4C_\ast} |k|\|\pa_yUf\|_2^2.
            \end{align}
Here, the constant $C_\ast$ is defined in \eqref{equiv_UV}.  
\end{lem}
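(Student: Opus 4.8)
The plan is to differentiate the weighted quantity $\phi\|\pa_y f\|_2^2$ and substitute the equation $\eqref{k_by_k_eq}_{\sigma=0}$, exactly as in the proof of Lemma~\ref{lem:monotone al}. Writing $\mathbb{T}_\al = \al\ep^{1/2}\phi'\|\pa_y f\|_2^2 + \al\ep^{1/2}\phi\,\frac{d}{dt}\|\pa_y f\|_2^2$ and using $\pa_y\pa_t f = \nu\pa_y^3 f - ik\,\pa_y V\, f - ikV\pa_y f$, a single integration by parts yields
\begin{align*}
\frac{d}{dt}\|\pa_y f\|_2^2 = 2\Re\int\lf(\nu\pa_y^3 f - ik\,\pa_y V\, f - ikV\pa_y f\rg)\overline{\pa_y f}\,dy = -2\nu\|\pa_y^2 f\|_2^2 - 2\Re\int ik\,\pa_y V\, f\,\overline{\pa_y f}\,dy,
\end{align*}
since $V$ is real-valued, so the transport contribution $-2\Re\int ikV|\pa_y f|^2\,dy$ vanishes. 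Consequently $\mathbb{T}_\al = \al\ep^{1/2}\phi'\|\pa_y f\|_2^2 - 2\al\ep^{1/2}\phi\nu\|\pa_y^2 f\|_2^2 - 2\al\ep^{1/2}\phi\,\Re\int ik\,\pa_y V\, f\,\overline{\pa_y f}\,dy$.

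For the cross term I would apply Cauchy--Schwarz, $\abs{\Re\int ik\,\pa_y V\, f\,\overline{\pa_y f}\,dy}\le|k|\,\|\pa_y V f\|_2\,\|\pa_y f\|_2$, and then invoke the equivalence \eqref{equiv_UV}, $|\pa_y V|\le C_\ast|\pa_y U|$, to pass to $\|\pa_y V f\|_2\le C_\ast\|\pa_y U f\|_2$. This substitution is the conceptual step of the lemma: the coercive ingredient inside $\mathcal{G}$ is the $\gamma$-term $\|\pa_y U f\|_2^2$, so the dissipation extracted at the $\al$-level must be routed through the reference flow $U$ rather than through $V$.

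It then remains to absorb the prefactors using the scaling identities $\ep^{1/2}=\nu^{1/2}|k|^{-1/2}$ and $\ep|k|=\nu$, together with the uniform bound $\phi'\le\nu^{1/2}|k|^{1/2}$ from \eqref{dt_weights}. The first term obeys $\al\ep^{1/2}\phi'\|\pa_y f\|_2^2\le\al\nu\|\pa_y f\|_2^2$, and Young's inequality applied to the cross term $2M\,\|\pa_y U f\|_2\,\|\pa_y f\|_2$ with $M=\al\ep^{1/2}\phi|k|C_\ast$, namely $2M\,ab\le\lambda a^2+\lambda^{-1}M^2 b^2$ with $\lambda=\frac{\beta\phi^2|k|}{4C_\ast}$, produces a $\|\pa_y U f\|_2^2$ term of weight $\frac{\beta\phi^2|k|}{4C_\ast}$ and a $\|\pa_y f\|_2^2$ term of weight $\lambda^{-1}M^2=\frac{4\al^2C_\ast^3}{\beta}\,\ep|k|=\frac{4\al^2C_\ast^3}{\beta}\nu$. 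Adding the two $\|\pa_y f\|_2^2$ contributions gives the coefficient $\al\nu\bigl(1+\frac{4\al}{\beta}C_\ast^3\bigr)$, which is precisely \eqref{deg_al_est}. I do not anticipate a genuine obstacle: the only care needed is to keep the powers of $\phi$ and of $\ep$ matched at each step so that $\ep|k|=\nu$ is invoked correctly, and to choose the Young weight $\lambda$ so that the constant in front of $\|\pa_y U f\|_2^2$ emerges as exactly $\frac{\beta\phi^2}{4C_\ast}|k|$ as stated.
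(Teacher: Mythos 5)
Your proposal is correct and follows essentially the same route as the paper: differentiate $\phi\|\pa_y f\|_2^2$, integrate by parts so the transport term drops out, bound $\phi'\ep^{1/2}\le\nu$, replace $\|\pa_y V f\|_2$ by $C_\ast\|\pa_y U f\|_2$ via \eqref{equiv_UV}, and apply Young's inequality with exactly the weight needed to produce $\frac{\beta\phi^2|k|}{4C_\ast}\|\pa_y U f\|_2^2$ and $\frac{4\al^2}{\beta}C_\ast^3\nu\|\pa_y f\|_2^2$. The arithmetic checks out and matches \eqref{deg_al_est}.
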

\begin{lem}[$\beta$-estimate]\label{lem:nondegenerate beta}
	The following estimate holds
	\begin{align}
            \mathbb{T}_\beta \leq& \lf(\frac{1}{4}+ 4\beta C_\ast \rg)\nu\|\pa_yf\|_2^2+2{\al} \phi\ep^{1/2}\nu\|\pa_y^2f\|_2^2  - \frac{3}{4}\frac{\beta\phi^2|k|}{C_\ast}\|\pa_{y}Uf\|_2^2\\
        &+\lf(\frac{\beta\phi^2}{|k|^{1/2}}+\frac{\beta\phi}{2\al}\|\pa_{yy}U\|_\infty^2\rg)\beta \phi^2\nu^{1/2}|k|^{1/2}\|f\|_2^2  + \lf(\frac{3\beta^2}{4\al\gamma}\rg)\gamma\phi^3\ep^{-1/2}\nu\|\pa_{y}U\pa_yf\|_2^2 . \label{ndeg_beta_est}
	\end{align}
Here the constant $C_\ast$ is defined in \eqref{equiv_UV}.
\end{lem}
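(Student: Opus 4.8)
The plan is to differentiate the mixed term of $\mathcal G$ and sort the resulting pieces according to where the time derivative lands. Writing $\mathbb T_\beta=\beta\frac{d}{dt}\big(\phi^2\,\Re\langle i\pa_yUf,\pa_yf\rangle\big)$ and using the product rule gives four contributions: the time-weight term $T_{\beta;1}=2\beta\phi\phi'\,\Re\langle i\pa_yUf,\pa_yf\rangle$; the term from differentiating the coefficient, $T_{\beta;2}=\beta\phi^2\,\Re\langle i\pa_{ty}Uf,\pa_yf\rangle$; and the two terms where $\pa_t$ hits $f$ and $\pa_yf$, namely $T_{\beta;3}=\beta\phi^2\,\Re\langle i\pa_yU\pa_tf,\pa_yf\rangle$ and $T_{\beta;4}=\beta\phi^2\,\Re\langle i\pa_yUf,\pa_y\pa_tf\rangle$. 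In the last two I substitute $\pa_tf=\nu\pa_y^2f-ikVf$ (hence $\pa_y\pa_tf=\nu\pa_y^3f-ik\pa_y(Vf)$) and split each into its viscous and transport parts.

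The two elementary terms are handled by Cauchy--Schwarz and Young. For $T_{\beta;1}$ I use $|\Re\langle i\pa_yUf,\pa_yf\rangle|\le\|\pa_yUf\|_2\|\pa_yf\|_2$ and distribute the weights so that the $\|\pa_yf\|_2^2$ factor picks up a coefficient of order $\nu$ — here $\phi\le1$ and $(\phi')^2/|k|=\nu\,\mathbbm 1_{[0,\nu^{-1/2}|k|^{-1/2}]}\le\nu$ — while the $\|\pa_yUf\|_2^2$ factor is assigned exactly $\tfrac{\beta\phi^2|k|}{4C_\ast}$, yielding $T_{\beta;1}\le\tfrac{\beta\phi^2|k|}{4C_\ast}\|\pa_yUf\|_2^2+4\beta C_\ast\nu\|\pa_yf\|_2^2$. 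For $T_{\beta;2}$ I invoke the slow-critical-point bound $\|\pa_{ty}U\|_{L^\infty_tL^\infty_y}\le\nu^{3/4}$ from \eqref{asmpt0} together with Young's inequality to get $T_{\beta;2}\le\tfrac14\nu\|\pa_yf\|_2^2+\beta^2\phi^4\nu^{1/2}\|f\|_2^2$, where $\beta^2\phi^4\nu^{1/2}=\tfrac{\beta\phi^2}{|k|^{1/2}}\cdot\beta\phi^2\nu^{1/2}|k|^{1/2}$ is exactly the weight in front of $\|f\|_2^2$ appearing in \eqref{ndeg_beta_est}.

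The core of the argument is $T_{\beta;3}+T_{\beta;4}$. Its transport parts are $\beta\phi^2k\,\Re\int\pa_yU\,Vf\,\overline{\pa_yf}\,dy$ from $T_{\beta;3}$ and, after expanding $\pa_y(Vf)=\pa_yV\,f+V\pa_yf$, $-\beta\phi^2k\int\pa_yU\,\pa_yV\,|f|^2\,dy-\beta\phi^2k\,\Re\int\pa_yU\,Vf\,\overline{\pa_yf}\,dy$ from $T_{\beta;4}$; the two first-order integrals cancel identically, leaving only $-\beta\phi^2k\int\pa_yU\,\pa_yV\,|f|^2\,dy$. Here — and only here — the phase assumption enters: $\pa_yV\,\pa_yU\ge0$ on $[0,T]\times\Torus$ together with $|\pa_yV|\ge C_\ast^{-1}|\pa_yU|$ from \eqref{equiv_UV} gives $\pa_yU\,\pa_yV\ge C_\ast^{-1}|\pa_yU|^2$, so this term is $\le-\tfrac{\beta\phi^2|k|}{C_\ast}\|\pa_yUf\|_2^2$, and absorbing the leftover $+\tfrac{\beta\phi^2|k|}{4C_\ast}\|\pa_yUf\|_2^2$ from $T_{\beta;1}$ produces the stated $-\tfrac34\tfrac{\beta\phi^2|k|}{C_\ast}\|\pa_yUf\|_2^2$. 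The viscous parts $\beta\phi^2\nu\big(\Re\langle i\pa_yU\pa_y^2f,\pa_yf\rangle+\Re\langle i\pa_yUf,\pa_y^3f\rangle\big)$ I treat by integrating by parts once in the second inner product, which generates $\beta\phi^2\nu$ times terms controlled by $\|\pa_yU\pa_yf\|_2\|\pa_y^2f\|_2$ and $\|\pa_{yy}U\|_\infty\|f\|_2\|\pa_y^2f\|_2$. Young's inequality, allotting each of these an $\al\phi\ep^{1/2}\nu\|\pa_y^2f\|_2^2$ budget (this is the $+2\al\phi\ep^{1/2}\nu\|\pa_y^2f\|_2^2$ in the claim), leaves errors of size $\lesssim\tfrac{\beta^2}{\al}\phi^3\ep^{-1/2}\nu\|\pa_yU\pa_yf\|_2^2$ and $\lesssim\tfrac{\beta^2}{\al}\phi^3\|\pa_{yy}U\|_\infty^2\ep^{-1/2}\nu\|f\|_2^2$; since $\ep^{-1/2}\nu=\nu^{1/2}|k|^{1/2}$ and $\beta^2\le\al\gamma$ by \eqref{ndeg_bnd_req}, these are exactly the $\tfrac{3\beta^2}{4\al\gamma}\gamma\phi^3\ep^{-1/2}\nu\|\pa_yU\pa_yf\|_2^2$ and $\tfrac{\beta\phi}{2\al}\|\pa_{yy}U\|_\infty^2\cdot\beta\phi^2\nu^{1/2}|k|^{1/2}\|f\|_2^2$ terms of \eqref{ndeg_beta_est}. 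Collecting the four estimates proves the lemma.

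The genuinely delicate ingredient is short: it is the cancellation of the two $\Re\int\pa_yU\,Vf\,\overline{\pa_yf}$ integrals in $T_{\beta;3}+T_{\beta;4}$, which turns the a priori unsigned transport contribution into the coercive quantity $-\tfrac{\beta\phi^2|k|}{C_\ast}\|\pa_yUf\|_2^2$; this is precisely where the sign compatibility $\pa_yV\,\pa_yU\ge0$ of the phase assumption is consumed. The bulk of the labor, and the reason the statement looks elaborate, is the weight bookkeeping — each Young-inequality remainder in the viscous and time-weight terms must land with a carefully chosen power of $\phi$, $\ep$, $\nu$, $|k|$ so that, once \eqref{ndeg_T_albe_term} is reassembled with Lemma \ref{lem:nondegenerate al} and the $\gamma$-estimate, the positive $\|\pa_y^2f\|_2^2$, $\|\pa_yU\pa_yf\|_2^2$ and $\|\pa_yUf\|_2^2$ contributions are strictly dominated by the negative ones coming from $\mathbb T_\al$ and $\mathbb T_\gamma$.
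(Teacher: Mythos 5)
Your proposal is correct and follows essentially the same route as the paper: the identical four-term decomposition of $\mathbb{T}_\beta$, the same Young-inequality weights for $\mathbb{T}_{\beta;1}$ and $\mathbb{T}_{\beta;2}$, the same cancellation of the two $\Re\int \pa_yU\,Vf\,\overline{\pa_yf}$ integrals between $\mathbb{T}_{\beta;3}$ and $\mathbb{T}_{\beta;4}$, and the same use of $\pa_yU\,\pa_yV\geq 0$ with \eqref{equiv_UV} to extract the coercive $-\frac{\beta\phi^2|k|}{C_\ast}\|\pa_yUf\|_2^2$. The coefficient bookkeeping in your viscous-term estimates also matches the paper's exactly.
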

\begin{lem}[$\gamma$-estimate]\label{lem:nondegenerate gamma}
	The following estimate holds on the interval $[0,T]$
	\begin{align}
            \mathbb{T}_\gamma \leq& \lf(\frac{3\gamma C_\ast}{\beta}+\frac{1}{4}\rg)\frac{\beta|k|\phi^2\|\pa_yUf\|_2^2}{C_\ast} +\lf( \frac{4C_\ast\gamma^2\phi^2}{\beta^2|k|^{1/2}}+\frac{4\gamma}{\beta}\phi\|\pa_{yy}U\|_\infty^2\rg)\beta\phi^2\nu^{1/2}|k|^{1/2}\|f\|_2^2  -\gamma\phi^3\ep^{-1/2}\nu\|\pa_yU\pa_yf\|_2^2.\label{ndeg_gamma_est}
	\end{align}
Here the $C_\ast$ is defined in \eqref{equiv_UV}. 
\end{lem}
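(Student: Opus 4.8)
The plan is to differentiate $\mathbb{T}_\gamma = \gamma\ep^{-1/2}\frac{d}{dt}\bigl(\phi^3\|\pa_y U f\|_2^2\bigr)$ by the product rule, substitute the equation $\eqref{k_by_k_eq}_{\sigma=0}$, and then reduce everything to a few elementary estimates. Writing $\|\pa_y U f\|_2^2 = \int |\pa_y U|^2|f|^2\,dy$, the product rule produces one term carrying $\phi'$, one term carrying $\pa_{ty}U$ (since $\pa_y U$ is time dependent), and one term carrying $\pa_t|f|^2 = 2\Re(\bar f\,\pa_t f)$. In the last of these I would insert $\pa_t f = \nu\pa_{yy}f - ikVf$: the transport part contributes $-2k\,\Re\int |\pa_y U|^2 V\,(i|f|^2)\,dy$, which vanishes because the integrand is purely imaginary (here one uses that $V$ is real-valued), while the diffusion part, after one integration by parts on $\Torus$, yields $-2\gamma\ep^{-1/2}\phi^3\nu\|\pa_y U\,\pa_y f\|_2^2$ together with a cross term $-4\gamma\ep^{-1/2}\phi^3\nu\,\Re\int \pa_y U\,\pa_{yy}U\,\bar f\,\pa_y f\,dy$.

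With this decomposition in hand, the remaining work is bookkeeping with Cauchy--Schwarz and Young's inequality, together with the identities $\ep = \nu|k|^{-1}$ and $\phi'\le \nu^{1/2}|k|^{1/2}$. For the $\phi'$ term I would use $\ep^{-1/2}\phi'\le |k|$ to bound it by $3\gamma|k|\phi^2\|\pa_y U f\|_2^2$, which I rewrite as $\frac{3\gamma C_\ast}{\beta}\cdot\frac{\beta|k|\phi^2}{C_\ast}\|\pa_y U f\|_2^2$. For the $\pa_{yy}U$ cross term I would apply Cauchy--Schwarz against $\|\pa_y U\,\pa_y f\|_2$, then Young's inequality, spending one copy of $\gamma\ep^{-1/2}\phi^3\nu\|\pa_y U\,\pa_y f\|_2^2$ (affordable because the diffusion part supplied a factor $2$) and producing a remainder $4\gamma\phi^3\nu^{1/2}|k|^{1/2}\|\pa_{yy}U\|_\infty^2\|f\|_2^2$, using $\ep^{-1/2}\nu = \nu^{1/2}|k|^{1/2}$. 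Collecting the two dissipation contributions leaves exactly the single clean term $-\gamma\phi^3\ep^{-1/2}\nu\|\pa_y U\,\pa_y f\|_2^2$ of \eqref{ndeg_gamma_est}.

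The step I expect to require the most care -- and the only place where the structural hypotheses on $U$ really enter -- is the $\pa_{ty}U$ term $2\gamma\ep^{-1/2}\phi^3\int \pa_y U\,\pa_{ty}U\,|f|^2\,dy$. Here I would first invoke the slow-variation assumption $\|\pa_{ty}U\|_{\infty}\le\nu^{3/4}$ from \eqref{asmpt0}, then estimate $\int |\pa_y U|\,|f|^2\,dy \le \|\pa_y U f\|_2\,\|f\|_2$ by Cauchy--Schwarz, and use $\ep^{-1/2}\nu^{3/4}=\nu^{1/4}|k|^{1/2}$ to arrive at the bound $\le 2\gamma\nu^{1/4}|k|^{1/2}\phi^3\|\pa_y U f\|_2\|f\|_2$. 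A weighted Young's inequality, tuned so that the $\|\pa_y U f\|_2^2$ part carries the coefficient $\frac{\beta|k|\phi^2}{4C_\ast}$, then splits this into the remaining quarter of the $\|\pa_y U f\|_2^2$ coefficient in \eqref{ndeg_gamma_est} plus the term $\frac{4C_\ast\gamma^2\phi^2}{\beta^2|k|^{1/2}}\,\beta\phi^2\nu^{1/2}|k|^{1/2}\|f\|_2^2$. Adding the two $\|f\|_2^2$ contributions and the two $\|\pa_y U f\|_2^2$ contributions then reproduces \eqref{ndeg_gamma_est} verbatim. The conceptual subtlety is that the resulting $\|f\|_2^2$ term has no damping sign on its own; the $\nu^{3/4}$ threshold is precisely calibrated so that, once Lemmas~\ref{lem:nondegenerate al}--\ref{lem:nondegenerate gamma} are assembled against $\mathbb{T}_{L^2}$, the surviving favorable power of $\nu$ allows it to be absorbed.
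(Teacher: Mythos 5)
Your proposal is correct and follows essentially the same route as the paper: product rule on $\gamma\ep^{-1/2}\phi^3\|\pa_yUf\|_2^2$, substitution of the equation with the transport contribution vanishing by realness of $V$, one integration by parts producing the dissipation term plus the $\pa_yU\,\pa_{yy}U$ cross term, and then Cauchy--Schwarz/Young with exactly the weights needed to reproduce the stated coefficients (including the $\nu^{3/4}$ bound on $\pa_{ty}U$ and the identities $\ep^{-1/2}\phi'=|k|$, $\ep^{-1/2}\nu=\nu^{1/2}|k|^{1/2}$). The bookkeeping matches the paper's term by term.
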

These estimates allow us to prove Theorem \ref{ndeg_main_thm}.
\begin{proof}[Proof of Theorem~\ref{ndeg_main_thm}]
    If $T\leq 2\nu^{-1/2}\myr{|k|^{-1/2}}$, then standard $L^2$-energy estimate yields \eqref{ndeg_ED_main}. Hence, we assume $T>2\nu^{-1/2}\myr{|k|^{-1/2}}$ without loss of generality. We distinguish between two time intervals, i.e., 
    \begin{align}\label{ndeg_I1_I2}
        \mathcal{I}_1=[0, \nu^{-1/2}\myr{|k|^{-1/2}}], \quad \mathcal{I}_2=[\nu^{-1/2}\myr{|k|^{-1/2}},T].
    \end{align}
    We organize the proof into three steps. In step \# 1, we choose the $\al, \beta,\gamma$ parameters and derive the energy dissipation relation. In step \# 2, we estimate the functional $\cG$ in the time interval $\mathcal{I}_1$. In step \# 3, we estimate the functional $ \cG$ in the time interval $\mathcal{I}_2$ and conclude the proof. 
    
    \noindent {\bf Step \# 1: Energy bounds.}        Combining the estimates \eqref{T_L2}, \eqref{deg_al_est}, \eqref{ndeg_beta_est}, \eqref{ndeg_gamma_est}, we obtain that
    \begin{align}
        \frac{d}{dt}\mathcal{G}[f(t)] 
        \leq&  - \lf(\frac{7}{4}-  \myr{\al} - \frac{4\al^2}{\beta}C_\ast^3- 4\beta C_\ast \rg)\nu\|\pa_yf\|_2^2- \lf(\frac{1}{4} - \frac{3\gamma C_\ast}{\beta}\rg)\frac{\beta|k|\phi^2}{C_\ast}\|\pa_{y}Uf\|_2^2 \\
        &+\lf(\frac{\beta \phi^2}{|k|^{1/2}} + \frac{\beta\phi}{2\al}\|\pa_{yy}U\|_\infty^2+ \frac{4C_\ast\gamma^2\phi^2}{\beta^2|k|^{1/2}} + \frac{4\gamma}{\beta}\phi\|\pa_{yy}U\|_\infty^2\rg)\beta\phi^2\nu^{1/2}|k|^{1/2}\|f\|_2^2\\
        &- \gamma\lf(1 - \frac{3\beta^2}{4\al\gamma}\rg)\phi^3\nu\ep^{-1/2}\|\pa_{y}U\pa_yf\|_2^2.
    \end{align}
    We choose $\al$, $\gamma$ in terms of $\beta(\leq 1)$ as follows 
    \begin{align}\label{chc_al_ga}
        \al = \frac{\beta^{1/2}}{4\myr{C_\ast^{3/2}}}, \quad\quad \gamma = 4\beta^{3/2}\myr{C_\ast^{3/2}}.
    \end{align}
    The resulting differential inequality is
    \begin{align*}
    \frac{d}{dt}\cG[f(t)]\leq &  - \lf(\frac{5}{4} - 4\beta C_\ast \rg)\underbrace{\ep|k|}_{=\nu}\|\pa_yf\|_2^2        - \lf(\frac{1}{4} - 12\beta^{1/2}C_\ast^{5/2} \rg)\frac{\beta|k|\phi^2}{C_\ast}\|\pa_{y}Uf\|_2^2 \\
        &+\underbrace{\lf(\beta + 2\beta^{1/2}C_\ast^{3/2} +  {64C_\ast^4\beta}  + 16 {\beta^{1/2}C_\ast^{3/2}}  \rg)}_{\leq 83\beta^{1/2}C_\ast^4}\max\{1,\|\pa_{yy}U\|_\infty^2\}\beta\phi^2\underbrace{\ep^{1/2}|k|}_{=\nu^{1/2}|k|^{1/2}}\|f\|_2^2\\
        &- \frac{\gamma}{4}\phi^3\nu\ep^{-1/2}\|\pa_{y}U\pa_yf\|_2^2.
    \end{align*}
    Now we invoke the spectral inequality \eqref{spectral} to obtain that 
      \begin{align*}
    \frac{d}{dt}\cG[f(t)]\leq &  - \lf(\frac{5}{4} - 4\beta C_\ast-83\beta^{3/2}C_\ast^4\max\{1,\|\pa_{yy}U\|_\infty^2\}\rg)\nu\|\pa_yf\|_2^2   \\
        &     - \lf(\frac{1}{4C_\ast} - 12\beta^{1/2}C_\ast^{3/2}-83\beta^{1/2}C_\ast^4\mathfrak{C}_{\text{spec}} \max\{1,\|\pa_{yy}U\|_\infty^2\}\rg)\beta|k|\phi^2\|\pa_{y}Uf\|_2^2 .
    \end{align*}
    Hence we can choose
    \begin{align}
    \beta=\beta(C_\ast,\mathfrak{C}_{\mathrm{spec}},\|\pa_{yy}U\|_\infty)<1\label{chc_beta}
    \end{align}
    small enough, invoke the spectral inequality \eqref{spectral} and the equivalence relation \eqref{equiv_ndg}  to obtain that 
    \begin{align*}    \frac{d}{dt}\mathcal{G}[f(t)]\leq&- \frac{1}{2}\ep|k|\|\pa_y f\|_2^2-\frac{\beta}{8C_\ast}|k|\phi^2\|\pa_{y}Uf\|_2^2\\
    \leq&-\frac{\beta\phi^2}{16\mathfrak{C}_{\mathrm{spec}}C_\ast}\ep^{1/2}|k|\|f\|_2^2-\frac{1}{4}\nu^{1/2}|k|^{1/2}\ep^{1/2}\phi\|\pa_y f\|_2^2-\frac{\beta}{16C_\ast}\nu^{1/2}|k|^{1/2}\ep^{-1/2}\phi^3\|\pa_{y}Uf\|_2^2\\
    \leq& -\delta(\beta,\mathfrak{C}_{\text{spec}}^{-1}, C_\ast^{-1})\nu^{1/2}|k|^{1/2}\mathcal{G}[f].\label{dfn_del}
    \end{align*}   Finally, we observe that the parameter $\delta$ depends only on three parameters $C_{\ast},\,\mathfrak{C}_{\text{spec}}$ and $\|\pa_{yy}U\|_\infty$.
    
    \noindent
    {\bf Step \# 2: Initial time layer estimate.}
    This step is similar to the argument in the strictly monotone shear case. Thanks to the energy dissipation relation \eqref{dfn_del}, we obtain that
    \begin{align}
        \cG[f_k](t)\leq C\|f_{0;k}\|_{L^2}^2,\quad\forall t\in[0, \nu^{-1/2}|k|^{-1/2}].
    \end{align}
    
    \noindent
    {\bf Step \# 3: Long time estimate.}
    Assume $t\geq \nu^{-1/2}|k|^{-1/2}$. Thanks to the energy dissipation relation \eqref{dfn_del}, we obtain
    \begin{align}
        \frac{d}{dt}\cG[f]\leq& -\delta\nu^{1/2}|k|^{1/2}\cG[f].
    \end{align}Hence, we obtain that 
    \begin{align}
        \cG[f(t)] \leq& \cG[f(\nu^{-1/2}|k|^{-1/2})]e^{-\delta\nu^{1/2}\myr{|k|^{1/2}}\lf(t-\nu^{-1/2}\myr{|k|^{-1/2}}\rg)}
        \leq e\cG[f(0)] e^{-\delta\nu^{1/2}\myr{|k|^{1/2}}t}= e\|f(0)\|_2^2 e^{-\delta\nu^{1/2}\myr{|k|^{1/2}}t}. 
    \end{align}
    Now, the results from step 2 and 3 yields \eqref{Hypo_est_ndeg}.
\end{proof}
We conclude the section by providing the details of the proof of Lemma \ref{lem:nondegenerate al}, \ref{lem:nondegenerate beta}, and \ref{lem:nondegenerate gamma}. 
\begin{proof}[Proof of Lemma~\ref{lem:nondegenerate al}] We recall the definition of $\mathbb{T}_\al$~\eqref{ndeg_T_albe_term}. Invoking the equation~\eqref{k_by_k_eq} and integration by parts yields that
    \begin{align}
         \mathbb{T}_\al=& \al\phi'\ep^{1/2}\|\pa_yf\|_2^2 + \al\phi\ep^{1/2}\frac{d}{dt}\|\pa_yf\|_2^2 
         =  \al\phi'\ep^{1/2}\|\pa_yf\|_2^2 + 2\al\phi\ep^{1/2}\Re\int \pa_y(\nu\pa_{y}^2f - ikVf) \overline{\pa_y f}dy\\
         =& \al\phi'\ep^{1/2}\|\pa_yf\|_2^2 - 2\al\phi\ep^{1/2}\lf( \nu\|\pa_{y}^2f\|_2^2 + \Re\int ik\pa_yVf\overline{\pa_yf}dy\rg).
    \end{align}
         Now we apply H\"older inequality, the expression \eqref{dt_weights}, and the equivalence relation \eqref{equiv_UV} to obtain that 
    \begin{align}
         \mathbb{T}_\al\leq& \al\nu\|\pa_yf\|_2^2 - 2\al\phi\ep^{1/2}\nu\|\pa_{y}^2f\|_2^2 + 2\al\phi\ep^{1/2}|k|\|\pa_yVf\|_{2}\|\pa_yf\|_2\\
         \leq& \al\nu\|\pa_yf\|_2^2 - 2\al\phi\ep^{1/2}\nu\|\pa_{y}^2f\|_2^2 + \frac{4\al^2}{\beta}C_\ast^3\nu\|\pa_yf\|_2^2+\frac{\beta\phi^2|k|}{4C_\ast}\|\pa_{y}Uf\|_2^2.
    \end{align}
   This is \eqref{deg_al_est}.
\end{proof}
\begin{proof}[Proof of Lemma~\ref{lem:nondegenerate beta}]
    The estimate of the $\mathbb{T}_\beta$ term in \eqref{T_albe_term} is technical. We further decompose it into four terms and estimate them one by one:
    \begin{align}
        \mathbb{T}_\beta =& 2\beta\phi\phi'\Re\langle i\pa_{y}Uf,\pa_yf\rangle + \beta\phi^2\Re\int i\pa_{ty}Uf\overline{\pa_yf}dy + \beta\phi^2\Re\int i\pa_{y}U\pa_tf\overline{\pa_yf}dy + \beta\phi^2\Re\int i\pa_{y}Uf\overline{\pa_{yt}f}dy\\
        =:& \mathbb{T}_{\beta;1} + \mathbb{T}_{\beta;2} + \mathbb{T}_{\beta;3} + \mathbb{T}_{\beta;4}.\label{T_beta1234}
    \end{align}
    To begin with, we apply the expression \eqref{dt_weights}, the H\"older and Young's inequalities to derive the following bound for the $\mathbb{T}_{\beta;1}$ term, 
    \begin{align}
        \mathbb{T}_{\beta;1} \leq& 2\beta\phi\nu^{1/2}|k|^{1/2}\|\pa_{y}Uf\|_2\|\pa_yf\|_2 \leq \frac{\beta\phi^2|k|}{4C_\ast} \|\pa_{y}Uf\|_2^2 +  4\beta C_\ast \nu  \|\pa_yf\|_2^2.
    \end{align}
    Next we estimate the term $\mathbb{T}_{\beta;2}$ using the assumption~\eqref{asmpt0},
    \begin{align}
        \mathbb{T}_{\beta;2} \leq \beta\phi^2\|\pa_{ty}U\|_\infty\|f\|_2\|\pa_yf\|_2 
        \leq \beta^2\phi^4\nu^{1/2}\|f\|_2^2 + \frac{1}{4}\nu\|\pa_yf\|_2^2.
    \end{align}
    To estimate the $\mathbb{T}_{\beta;3}$-term in \eqref{T_beta1234}, we apply integration by parts and obtain that 
    \begin{align}
        \mathbb{T}_{\beta;3} =& \beta\phi^2\Re\int i\pa_{y}U\lf(\nu\pa_{yy}f-iVkf\rg)\overline{\pa_yf}dy = \beta\phi^2\lf(\nu\Re\int i\pa_{y}U\pa_{yy}f\overline{\pa_yf}dy + k\Re\int \pa_{y}UVf\overline{\pa_yf}dy\rg)\\
        \leq& \beta\phi^2\nu\|\pa_{y}U\pa_yf\|_2\|\pa_{yy}f\|_2 + \beta\phi^2k\Re\int \pa_{y}UVf\overline{\pa_yf}dy\\
        \leq& \al\phi\ep^{1/2}\nu\|\pa_{yy}f\|_2^2 + \lf(\frac{\beta^2}{4\al\gamma}\rg) \gamma\phi^3\ep^{-1/2}\nu\|\pa_{y}U\pa_yf\|_2^2 +  \beta\phi^2k\Re\int \pa_{y}UVf\overline{\pa_yf}dy.\label{T_beta_3}
    \end{align}
    Finally we estimate the term $\mathbb{T}_{\beta;4}$ in~\eqref{T_beta1234}
    \begin{align}
        \mathbb{T}_{\beta;4} =& \beta\phi^2\Re\int i\pa_{y}Uf\overline{\lf(\nu\pa_y^3f - ik\pa_yVf - ikV\pa_yf\rg)}dy\\
        =& \beta\phi^2\lf(-\nu\Re\int i\lf(\pa_{yy}Uf + \pa_{y}U\pa_yf\rg)\overline{\pa_{yy}f}dy - k\Re\int (\pa_{y}U\pa_yV)|f|^2dy - k\Re\int \pa_{y}UVf\overline{\pa_yf}dy\rg).
    \end{align}
    Now, we invoke the assumption \eqref{asmpt0} and the equivalence relation \eqref{equiv_UV} to obtain that
    \begin{align}
        \mathbb{T}_{\beta,4}\leq& \beta\phi^2\nu\|\pa_{yy}U\|_\infty\|f\|_2\|\pa_y^2f\|_2 + \beta\phi^2\nu\|\pa_{y}U\pa_yf\|_2\|\pa_y^2f\|_2 - \frac{\beta\phi^2|k|}{C_\ast}\Re\int |\pa_{y}U|^2|f|^2 dy - \beta\phi^2k\Re\int \pa_{y}UVf\overline{\pa_yf}dy\\
        \leq&  {\al} \phi\ep^{1/2}\nu\|\pa_y^2f\|_2^2 + \frac{\beta^2}{2\al}\phi^3\ep^{-1/2}\nu\|\pa_{yy}U\|_\infty^2\|f\|_2^2 + \lf(\frac{\beta^2}{2\al\gamma}\rg)\gamma\phi^3\ep^{-1/2}\nu\|\pa_{y}U\pa_yf\|_2^2\\
        & - \frac{\beta\phi^2|k|}{C_\ast}\|\pa_{y}Uf\|_2^2 - \beta\phi^2k\Re\int \pa_{y}UVf\overline{\pa_yf}dy.\label{T_beta_4}
    \end{align}
    Combining the estimates, we have
    \begin{align}
        \mathbb{T}_\beta \leq& \lf(\frac{1}{4}+ 4\beta C_\ast \rg)\nu\|\pa_yf\|_2^2 - \frac{3}{4}\frac{\beta\phi^2|k|}{C_\ast}\|\pa_{y}Uf\|_2^2+\lf(\frac{\beta\phi^2}{|k|^{1/2}}+\frac{\beta}{2\al}\phi\|\pa_{yy}U\|_\infty^2\rg)\beta \phi^2\nu^{1/2}|k|^{1/2}\|f\|_2^2 \\
        &+2{\al} \phi\ep^{1/2}\nu\|\pa_y^2f\|_2^2  + \lf(\frac{3\beta^2}{4\al\gamma}\rg)\gamma\phi^3\ep^{-1/2}\nu\|\pa_{y}U\pa_yf\|_2^2 .
    \end{align}
    This is the estimate \eqref{ndeg_beta_est}.
\end{proof}
\begin{proof}[Proof of Lemma~\ref{lem:nondegenerate gamma}]
    Combining the equation~\eqref{k_by_k_eq}, the smallness assumption \eqref{asmpt0}, and integration by parts yields the following bound
    \begin{align}
        \mathbb{T}_\gamma 
        \leq& 3\gamma\phi^2|k|\|\pa_{y}Uf\|_2^2 + 2\gamma\phi^3\ep^{-1/2}\lf(\int |\pa_{ty}U||\pa_{y}U||f|^2 dy + \Re\int |\pa_{y}U|^2\lf(\nu\pa_{yy}f - iVkf\rg)\overline{f}dy\rg)\\
        \leq& 3\gamma\phi^2|k|\|\pa_{y}Uf\|_2^2 +  2\gamma\phi^3\ep^{-1/2}\lf( \nu^{3/4} \|f\|_2\|\pa_{y}U f\|_2 -2\nu\Re\int \pa_{y}U\pa_yf\ \overline{\pa_{yy}U f}dy - \nu\| \pa_{y}U\pa_yf\|_2^2 \rg)\\
        \leq& \lf(\frac{3\gamma C_\ast}{\beta}+\frac{1}{4}\rg)\frac{\beta\phi^2|k|}{C_\ast}\|\pa_{y}Uf\|_2^2 + \lf(\frac{4C_\ast\gamma^2}{\beta^2|k|^{1/2}}\phi^2+\frac{4\gamma\phi}{\beta}\|\pa_{yy}U\|_\infty^2\|\rg)\beta\phi^2\nu^{1/2}|k|^{1/2}\|f\|_2^2  -\gamma\phi^3\ep^{-1/2}\nu\|\pa_{y}U\pa_yf\|_2^2.
    \end{align}This is \eqref{ndeg_gamma_est}.
\end{proof}


\section{Taylor Dispersion}\label{Taylor}
In this section, we prove Theorem \ref{Taylor_thm}. Let us focus on the Taylor dispersion regime $0<|k|\leq  \nu$ and further divide the proof into three steps.

\noindent
{\bf Step \# 1: Preliminaries.} 

Without loss of generality, we assume that $k> 0$ and focus on the hypoelliptic equation $\eqref{k_by_k_eq}_{\sigma=0}$. Thanks to the Dirichlet boundary condition $f(t,y=\pm 1)=0$, we have the following boundary constraints for \eqref{k_by_k_eq}:
\begin{align}
\wh f_k(t,y=\pm 1)=0, \quad \pa_{yy}\wh f_{k}(t, y=\pm 1)=0,\quad \forall t\geq0.\label{bc}
\end{align}
These boundary conditions enable us to implement integration by parts without creating boundary terms. We will adopt the simplified notation $f=\wh f_k$ in the remaining part of the section. 

Next, we identify the parameter regime in which the functional $\mathcal{T}$ \eqref{hypo_Tay} is comparable to the $H^1$-norm of the solution. We observe that 
\begin{align*}
\mathcal{T}[f]\geq \|f\|_2^2+\al\zeta \|\pa_y f\|_2^2- \beta\zeta\|\pa_y V\|_\infty\|f\|_2\|\pa_yf \|_2\geq \frac{1}{2}\|f\|_2^2+\al\zeta\lf(1-\frac{\beta^2}{\al}\|\pa_yV\|_{\infty}^2\rg)\|\pa_yf\|_2^2.
\end{align*}
Similar estimate holds for the upper bound.  
Hence if 
\begin{align}
\frac{\beta^2}{\al}\|\pa_yV\|_\infty^2\leq \frac{1}{2},\label{cnstr_ab_Taylor}
\end{align}
the following equivalence relation holds
\begin{align}
\frac{1}{2}\|f\|_2^2+\frac{1}{2}\al\zeta\|\pa_yf\|_2^2\leq \mathcal{T}[f]\leq
\frac{3}{2}\|f\|_2^2+\frac{3}{2}\al\zeta\|\pa_yf\|_2^2.   \label{Tay_equiv}
\end{align}
\ifx we consider the following hypocoercivity functional:
\begin{align}
T[f]=\|f\|_2^2+\al\|\pa_y f\|_2^2+\beta\Re\lan i\pa_{y}U\text{sign}{k} f,\pa_y f\ran.
\end{align}\fi

\noindent
{\bf Step \# 2: Hypocoercivity.} 
We compute the time derivative of the hypocoercivity functional $\mathcal{T}[f]$  \eqref{hypo_Tay}
\begin{align}\label{Tay_123}
\frac{d}{dt}\mathcal T[f]=\frac{d}{dt}\|f\|_2^2+\al\frac{d}{dt}\lf(\zeta\|\pa_y f\|_2^2\rg)+\beta \frac{d}{dt}\lf(\zeta\Re\lan i\pa_yV\  \text{sign}{k}\  f,\pa_y f\ran\rg)=:T_1+T_2+T_3.
\end{align}
By recalling the boundary condition \eqref{bc}, we apply integration by parts to obtain
\begin{align}
T_1=-2\nu\|\pa_y f\|_2^2.\label{Tay_1}
\end{align}
The $T_2$-term can be estimated using the constraint $|k|/\nu\leq1,$ integration by parts (with boundary condition \eqref{bc}), H\"older inequality and Young's inequality as follows
\begin{align}
T_2=&\al\zeta'\|\pa_y f\|_2^2+2\al\zeta\Re\lan\nu\pa_{yyy}f -\pa_y Vik f-Vik\pa_y f ,\pa_y f\ran \\
\leq& \al\frac{|k|^2}{\nu^2}\nu\|\pa_y f\|_2^2 -2\al\zeta\nu\|\pa_{yy}f\|_2^2+2\al \zeta\frac{|k|}{\nu}\nu\|\pa_y Vf\|_2\|\pa_yf\|_2\\
\leq &\al \nu\|\pa_y f\|_2^2 -2\al\zeta\nu\|\pa_{yy}f\|_2^2+\frac{\beta}{4}\zeta |k|\|\pa_y Vf\|_2^2+ \frac{4\al^2}{\beta}\zeta \nu\|\pa_yf\|_2^2.\label{Tay_2}
\end{align}
The $T_3$-term in \eqref{Tay_123} can be estimated in a similar fashion as the one in \eqref{T_beta1234}. We decompose the term as follows
\begin{align}
T_3=&\zeta' \beta\Re\int i\pa_y V f \overline{\pa_y f}dy+\zeta\beta\Re\int i \pa_{ty}V f\overline{\pa_y f}dy+\beta\zeta\Re\int i \pa_yV  (\nu\pa_{yy}f-ikV f)\overline{\pa_y f}dy\\
&+\beta\zeta\Re\int i \pa_yV f \overline{ (\nu\pa_{yyy}f-ik\pa_y V f-ik V\pa_y f)}dy=:\sum_{i=1}^4 T_{3i}.\label{T_3_Tay}
\end{align}
We recall the assumption \eqref{Taylorcnstr}  and the definition \eqref{ep&t_weights} to obtain\begin{align}\label{T_312_Tay}
T_{31}+T_{32}\leq &\nu^{-1}|k|^2\beta\|\pa_y V\|_{\infty}\|f \|_2\|\pa_y f\|_2+\beta\mathfrak{C}_2\nu\|f\|_2\|\pa_yf\|_2.
\end{align}
Similar to the estimate in \eqref{T_beta_3}, we estimate the $T_{33}$-term in \eqref{T_3_Tay} as follows
\begin{align}
T_{33}\leq& \beta\zeta\Re\int i\pa_{y}V\lf(\nu\pa_{yy}f-iVkf\rg)\overline{\pa_yf}dy = \beta\zeta\lf(\nu\Re\int i\pa_{y}V\pa_{yy}f\overline{\pa_yf}dy + k\Re\int V\pa_{y}Vf\overline{\pa_yf}dy\rg)\\
        \leq& \beta\zeta \nu \| \pa_y V\|_{\infty}\|\pa_{yy} f\|_2\|\pa_y f\|_2 + \beta\zeta k\Re\int V\pa_{y}Vf\overline{\pa_yf}dy.\label{T_33_Tay}
\end{align} Similar to the estimate in 
\eqref{T_beta_4}, we invoke the boundary condition \eqref{bc} to estimate the $T_{34}$-term in \eqref{T_3_Tay} as follows
\begin{align}
T_{34}\leq& \beta\zeta\nu\|\pa_{yy}V\|_\infty\|f\|_2\|\pa_{yy}f\|_2 + \beta\zeta\nu\|\pa_{y}V\|_\infty\|\pa_yf\|_2\|\pa_{yy}f\|_2 - {\beta\zeta|k|}\|\pa_{y}Vf\|_2^2 - \beta\zeta k\Re\int V\pa_{y}Vf\overline{\pa_yf}dy.\\ \label{T_34_Tay}
\end{align}
Finally, we recall the Poincar\'e inequality for $f\in H^1_0([-1,1])$:
\begin{align} \label{Poincare}
\|f\|_2\leq C\|\pa_y f\|_2.
\end{align}
Now we combine the decomposition \eqref{T_3_Tay}, the estimates \eqref{T_312_Tay}, \eqref{T_33_Tay} and \eqref{T_34_Tay} and the Poincar\'e inequality \eqref{Poincare} to estimate the $T_3$-term as follows
\begin{align}
T_3\leq& C_0\lf( \nu^{-2}|k|^2\|\pa_yV\|_\infty+ \mathfrak{C}_2+\frac{\beta}{\al}\|\pa_y V\|_\infty^2 +\frac{\beta}{\al}\|\pa_{yy} V\|_\infty^2 \rg)\beta \nu \|\pa_yf\|_2^2+\al\zeta\nu \|\pa_{yy}f\|_2^2-\frac12\beta\zeta|k|\lf\|\pa_y Vf\rg\|_2^2.\\\label{Tay_3}
\end{align}
Here $C_0\geq 1$ is a universal constant. Now we set 
\begin{align}
\al=\beta=\frac{1}{16C_0\lf(1+\mathfrak{C}_2+\|\pa_y V\|_{L_{t,y}^\infty }^2+\|\pa_{yy}V\|_{L_{t,y}^\infty}^2\rg)},
\end{align}
which is consistent with \eqref{cnstr_ab_Taylor}. Combining the decomposition \eqref{Tay_123} and the estimates  \eqref{Tay_1}, \eqref{Tay_2}, \eqref{Tay_3}  yields that 
\begin{align}
\frac{d}{dt}\mathcal{T}[f]\leq-\nu\|\pa_y f\|_2^2-\frac{1}{4}\beta \zeta|k|\|\pa_yV f \|_2^2.
\end{align}
Thanks to the spectral inequality \eqref{Spec_Tay} and the equivalence \eqref{Tay_equiv}, we obtain that for $|k|\leq \nu$, 
\begin{align} 
\frac{d}{dt}\mathcal{T}[f]\leq&-\frac{1}{2}\nu\|\pa_y f\|_2^2 -\frac{\beta}{4}\frac{|k|^2}{\nu}\lf(\frac{\nu^2}{|k|^{2}}\|\pa_y f\|_2^2+\frac{\nu}{|k|}\zeta\|\pa_y V f\|_2^2\rg)\\
\leq&-\frac{1}{2}\frac{k^2}{\nu^2}\nu\|\pa_y f\|_2^2 -\frac{|k|^2}{4\nu}\beta\zeta(\|\pa_{y}f\|_{2}^2+\|\pa_y V f\|_2^2)\leq -\frac{1}{2}\frac{|k|^2}{\nu}\|\pa_yf\|_2^2-\frac{|k|^2}{\nu}\frac{\beta\zeta}{4\mathcal{C}_{spec}}\|f\|_2^2\\
\leq&-\frac{|k|^2}{\nu}\frac{\beta\zeta}{4\mathcal{C}_{spec}}\lf(\|f\|_2^2+\al\zeta\|\pa_yf\|_2^2\rg)\leq -\frac{|k|^2}{\nu}\frac{\beta\zeta}{6\mathcal{C}_{spec}}\mathcal{T}[f]=:-\delta(\beta,\mathcal{C}_{spec}^{-1})\zeta\frac{|k|^2}{\nu}\mathcal{T}[f].\label{dtT}
\end{align}

\noindent 
{\bf Step \# 3: Conclusion.} Without loss of generality, set $T\geq2\nu|k|^{-2}$. 
We observe that for $t\in[0,\nu|k|^{-2}]$, since $\frac{d}{dt}\mathcal{T}\leq 0,$ 
\begin{align}
\mathcal{T}[f(t)]\leq 
\mathcal{T}[f(0)]=\|f(0)\|_{2}^2. 
\end{align}
Thanks to the estimate \eqref{dtT}, we have that 
\begin{align}
\mathcal{T}[f(t)]\leq 
\mathcal{T}[f(\nu|k|^{-2})]\exp\lf\{-\delta \frac{|k|^2}{\nu}(t-\nu|k|^{-2})\rg\}\leq e\|f(0)\|_{2}^2\exp\lf\{-\delta \frac{|k|^2}{\nu} t \rg\},\quad \forall t\in[\nu|k|^{-2},T]. 
\end{align}
This concludes the proof of \eqref{Hypo_est_Taylor} and Theorem \ref{Taylor_thm}.
\ifx
The time weight associated with the Taylor dispersion is 
\begin{align}
\zeta(t)=\min\lf\{1,\frac{|k|^2}{\nu}t\rg\}.
\end{align}
\begin{align}
T[f]=\|f\|_2^2+\al\zeta\|\pa_y f\|_2^2+\beta\zeta\Re\lan i\pa_{y}U\text{sign}{k} f,\pa_y f\ran.
\end{align}
Now the time derivative of the $\al$ term reads
\begin{align}
\frac{|k|^2}{\nu}\al\|\pa_y f\|_2^2=
\frac{|k|^2}{\nu^2}\nu\al\|\pa_y f\|_2^2\leq \al\nu\|\pa_y f\|_2^2.
\end{align}
Now the time derivative of the $beta$ term reads as follows
\begin{align*}
\frac{|k|^2}{\nu^2}\nu \beta\Re\lan i\pa_{y}U f,\pa_y f\ran\leq \beta\nu\|\pa_{y}U\|_\infty\|f\|_2\|\pa_y f\|_2.
\end{align*}\fi

\appendix
\section{Technical Lemmas}

The proof makes use of several spectral inequalities. We present them below. 
\begin{lem} 
a) Consider the domain, i.e., $y\in\Torus$. Assume that $U(t,y)$ has $N$ nondegenerate critical points $\{y_i(t)\}_{i=1}^N$ for $t\in[0,T]$. Moreover, there exist $N$ open neighbourhoods $B_{r}(y_i(t)),\, i=1,\cdots, N$, such that 
    \begin{align}
        | \pa_y{U}(t,y)|^2 \geq& \mathfrak{C}_0^{-1} (y-y_i(t))^2,\quad \forall t\in[0,T],\quad \forall y\in B_{r}(y_i(t)),\quad \forall y_i(t)\in \lf\{y\big|{\pa_y} U(t,y)=0\rg\},\\
        |\pa_yU(t,y)|&\in[\mathfrak C_1^{-1},\mathfrak C_1],\quad \forall y\in (\cup_{i=1}^NB_{r}(y_i(t))^c.
    \end{align}
    Then for $\nu$ small enough depending on the shear $U$, there exists a constant $\mathfrak{C}_{\text{Spec}}\geq 1$ such that the following estimate hold ($\ep=\nu/|k|$)
    \begin{align}\label{spectral}
        \ep^{1/2}\|f\|_{L^2(\Torus)}^2\leq \ep\|\pa_y f\|_{L^2(\Torus)}^2+\mathfrak{C}_{\text{Spec}}\lf\|  \pa_yU(t,\cdot)f\rg\|_{L^2(\Torus)}^2. 
    \end{align}
       
\noindent    
    b) Consider functions $f\in H_0^1([-1,1])$. Assume that the shear flow $V(t,y)$ satisfies the conditions specified in Theorem \ref{Taylor_thm}. Then there exists a constant $\mathcal{C}_{Spec}=\mathcal{C}_{Spec}(m_0, \mathfrak{C}_3, r_0)\ge 1$ such that the following estimate holds:
\begin{align}\label{Spec_Tay}
\|f\|_{L^2([-1,1])}^2\leq \mathcal{C}_{Spec}\|\pa_y f\|_{L^2([-1,1])}^2+\mathcal{C}_{Spec}\|\pa_y V(t,\cdot) f\|_{L^2([-1,1])}^2.
\end{align}
\end{lem}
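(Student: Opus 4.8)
The plan is to prove the two spectral inequalities separately. Part b) is essentially free from the Poincar\'e inequality, while part a) requires a localization argument near the critical points of $U$, in the spirit of the spectral lemmas used in \cite{BCZ15, Wei18}.

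\textbf{Part a).} I would fix $t\in[0,T]$, abbreviate $y_i=y_i(t)$, and introduce a length scale $\delta:=c_0\,\ep^{1/4}$ with $c_0$ a small numerical constant to be fixed, taking $\nu$ small enough that $2\delta<r$ (this is where the $\nu$-threshold depending on $U$ enters). Since the $B_r(y_i(t))$ are pairwise disjoint, the torus decomposes into $\Omega_0:=(\cup_i B_r(y_i))^c$, the cores $J_i:=(y_i-\delta,y_i+\delta)$, and the intermediate annuli $A_i:=B_r(y_i)\setminus J_i$, and I would bound $\ep^{1/2}\|f\|_{L^2}^2$ on each piece. On $\Omega_0$, the bound $|\pa_y U|\ge\mathfrak{C}_1^{-1}$ together with $\ep\le1$ gives $\ep^{1/2}\|f\|_{L^2(\Omega_0)}^2\le\mathfrak{C}_1^{2}\|\pa_yU f\|_{L^2(\Omega_0)}^2$. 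On $A_i$, where $|y-y_i|>\delta$, the quadratic lower bound gives $|\pa_y U|^2\ge\mathfrak{C}_0^{-1}\delta^2= c_0^2\mathfrak{C}_0^{-1}\ep^{1/2}$, whence $\ep^{1/2}\|f\|_{L^2(A_i)}^2\le c_0^{-2}\mathfrak{C}_0\|\pa_yU f\|_{L^2(A_i)}^2$. Note that only the lower bound on $|\pa_y U|$ is ever used, never the upper one.

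The crux is the core $J_i$, where $\pa_y U$ degenerates. Here I would exploit the thin ``good annulus'' $G_i:=\{z:\delta<|z-y_i|<2\delta\}\subset A_i$, of length $2\delta$, on which $|\pa_yU(z)|^2\ge\mathfrak{C}_0^{-1}\delta^2$. For $y\in J_i$ and $z\in G_i$, the fundamental theorem of calculus gives $|f(y)|^2\le 2|f(z)|^2+8\delta\,\|\pa_yf\|_{L^2(J_i')}^2$, where $J_i':=(y_i-2\delta,y_i+2\delta)$; averaging in $z$ over $G_i$ and inserting the pointwise bound $|f(z)|^2\le\mathfrak{C}_0\delta^{-2}|\pa_yU(z) f(z)|^2$ valid on $G_i$ yields $|f(y)|^2\le\mathfrak{C}_0\delta^{-3}\|\pa_yU f\|_{L^2(G_i)}^2+8\delta\,\|\pa_yf\|_{L^2(J_i')}^2$. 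Integrating over $y\in J_i$ (length $2\delta$), multiplying by $\ep^{1/2}$, and using $\delta=c_0\ep^{1/4}$ gives $\ep^{1/2}\|f\|_{L^2(J_i)}^2\le 2c_0^{-2}\mathfrak{C}_0\|\pa_yU f\|_{L^2(G_i)}^2+16 c_0^{2}\,\ep\,\|\pa_yf\|_{L^2(J_i')}^2$. Summing the three estimates over the disjoint sets $\Omega_0$, $\{A_i\}$ and the disjoint $\{J_i'\}\subset\cup_i B_r(y_i)$ produces $\ep^{1/2}\|f\|_{L^2(\Torus)}^2\le C(\mathfrak{C}_0,\mathfrak{C}_1,c_0)\|\pa_yU f\|_{L^2}^2+16 c_0^{2}\,\ep\,\|\pa_yf\|_{L^2}^2$, and choosing $c_0$ with $16c_0^{2}\le1$ gives \eqref{spectral} with $\mathfrak{C}_{\text{Spec}}=C(\mathfrak{C}_0,\mathfrak{C}_1)$; all constants are uniform in $t$ because the structural hypotheses are.

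\textbf{Part b) and main obstacle.} Part b) is immediate: for $f\in H_0^1([-1,1])$ the Poincar\'e inequality \eqref{Poincare} gives $\|f\|_{L^2}^2\le C_P\|\pa_yf\|_{L^2}^2\le C_P\big(\|\pa_yf\|_{L^2}^2+\|\pa_yV f\|_{L^2}^2\big)$, so \eqref{Spec_Tay} holds with $\mathcal{C}_{Spec}=\max\{C_P,1\}$; the non-degeneracy data $m_0,\mathfrak{C}_3,r_0$ are not even needed, and the term $\|\pa_yV f\|_{L^2}^2$ is retained only because it is the coercive quantity that actually appears in \eqref{dtT}. The single non-routine point is thus the core estimate in part a): the quadratic bound is useless on $J_i$, and a naive Poincar\'e comparison of $f(y)$, $y\in J_i$, with its average over the whole $B_r(y_i)$ leaves a term of size $\sim\ep^{3/4}\|f\|_{L^2}^2$, which is of the same order as the left-hand side once summed over the critical points and hence cannot be absorbed. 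Comparing instead with values in the thin annulus $G_i$, where $|\pa_yU|^2$ has just reached the critical size $\ep^{1/2}$, closes the estimate with nothing left on the $\|f\|_{L^2}^2$ side, at the cost only of an enlarged but $\nu$- and $k$-independent constant.
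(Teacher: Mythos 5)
Your proof is correct, but it follows a genuinely different route from the paper's for part a), and a much shorter one for part b). For the critical-point regions in part a), the paper uses the virial-type identity
\begin{align}
\ep^{1/2}\int |f\chi_i|^2\,dy=\frac{1}{2}\ep^{1/2}\Big|\int |f\chi_i|^2\,\frac{d^2}{dy^2}(y-y_i)^2\,dy\Big|
=\ep^{1/2}\Big|\int \pa_y|f\chi_i|^2\,(y-y_i)\,dy\Big|,
\end{align}
then bounds $|y-y_i|\leq \mathfrak{C}_0^{1/2}|\pa_yU|$ and applies Young's inequality, all within a partition of unity $\{\chi_i\}$ whose derivative produces the error term $\ep\|\pa_y\chi_0\|_\infty^2\|f\|_2^2$ that is absorbed for small $\nu$. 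You instead introduce the explicit intermediate scale $\delta\sim\ep^{1/4}$ (exactly the scale at which $|\pa_yU|^2$ reaches the size $\ep^{1/2}$), and transfer information from the good annulus $G_i$ into the core $J_i$ by the fundamental theorem of calculus; your bookkeeping (the factors $\ep^{1/2}\delta^{-2}=c_0^{-2}$ and $\ep^{1/2}\delta^{2}=c_0^{2}\ep$, the disjointness of the $J_i'$ and $G_i$, and the $\nu$-threshold coming from $2\delta<r$) is all consistent. The paper's integration-by-parts argument is shorter and avoids pointwise evaluation, while yours makes the critical length scale $\ep^{1/4}$ visible and uses only the lower bound on $|\pa_yU|$. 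For part b) you correctly observe that the stated inequality \eqref{Spec_Tay} is an immediate consequence of the Poincar\'e inequality on $H_0^1([-1,1])$, with a constant independent of $m_0,\mathfrak{C}_3,r_0$; the paper's partition-of-unity argument (Poincar\'e on each $f\chi_i$ plus non-vanishing of $\pa_yV$ on the supports of $\pa_y\chi_i$ and of $\chi_0$) is more elaborate than the statement requires, and your shortcut is legitimate since only the combination $\|\pa_yf\|_2^2+\|\pa_yVf\|_2^2$ is used in \eqref{dtT}.
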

\begin{proof}a) The proof of the theorem is stated in the paper \cite{BCZ15}. For the sake of completeness, we provide a different proof here. We can apply a partition of unity $\{\chi_i\}_{i=0}^N$ to decompose the function $f=f(\chi_0+\sum_{i=1}^n\chi_i)$, where $\{\chi_i\}_{i\neq 0}$ are supported near the critical points $y_i(t)$ and $\chi_0$ is supported away from the critical points. Moreover, $\sum_{i=0}^n\|\pa_y \chi_i\|_\infty\leq C$ and the supports of $\{\chi_i\}_{i\neq 0}$ are pairwise disjoint. 
Now we use the integration by parts formula
\begin{align}\n
    \ep^{1/2}\int_\rr |f_i|^2 dy=& \frac{1}{2}\ep^{1/2}\bigg|\int_{\rr} |f_i|^2 \frac{d^2}{dy^2}(y-y_i)^2 dy\bigg|  = \ep^{1/2}\bigg|\int_\rr \pa_y|f_i|^2 (y-y_i) dy\bigg|\\
    \leq& {2}\mathfrak{C}_0\ep^{1/2}\bigg|\Re\int_\rr \overline{f_i}\pa_y f_i   |\pa_yU|  dy\bigg|\leq \frac{1}{2}\ep\|\pa_y f_i\|_{L^2(\rr)}^2+C(\mathfrak C_0)\lf\|\pa_yU f_i\rg\|_{L^2(\rr)}^2,\quad i\neq 0.\label{spec_nr_crit}
\end{align}
Since the supports of the cutoff functions $\chi_i,\, i\neq 0$ are disjoint, we have that 
\begin{align}
\ep^{1/2}\int_{\Torus} |f(1-\chi_0)|^2dy\leq \ep \|\pa_y (f(1-\chi_0))\|_{L^2}^2+C(\mathfrak{C}_0)\|\pa_y U f(1-\chi_0)\|_{L^2}^2.
\end{align}
We further observe that, since the $|\pa_yU|\geq c>0$ on the support of $\chi_0$, 
\begin{align}
    \ep^{1/2}\|f\chi_0\|_{L^2}^2\leq C\||\pa_yU|f\chi_0\|_{L^2}^2.
\end{align}
Combining the above estimates, we have that 
\begin{align}\n
    \ep^{1/2}\|f\|_{L^2}^2\leq& 2\ep^{1/2}\|f\chi_{0}\|_{L^2}^2+2\ep^{1/2}\|f(1-\chi_0)\|_{L^2}^2\leq \ep\|\pa_y(f(1-\chi_0))\|_{L^2}^2+C(\mathfrak{C}_0)\lf\||\pa_yU| f\rg\|_{L^2}^2\\
\leq&\ep\|\pa_yf\|_{L^2}^2+C(\mathfrak{C}_0)\lf\||\pa_yU| f\rg\|_{L^2}^2+\ep\|\pa_y\chi_0\|_{L^\infty}^2\|f\|_{L^2}^2.
\end{align}
We can take the $\nu$ small enough so that the left-hand side absorbs the last term. This concludes the proof of the lemma.

b) The proof of the estimate \eqref{Spec_Tay} is similar to the proof above. We decompose the function $f$ as follows
\begin{align}
f=\sum_{i=0}^{N(t)}f\chi_i,\quad \text{support }\chi_i(t,\cdot)\in U_i:=B_{r_0/2}(y_i(t))\cap [-1,1], \quad \chi_i\in C^\infty(\rr_+\times[-1,1]), \quad \forall i\in\{0,1,\cdots, N(t)\}.
\end{align}
We observe that the sets $U_i$ are pairwise disjoint and $\text{distance}(U_i,U_j)\geq r_0>0$ for $i\neq j,\, i,j\in \{1, 2,\cdots, N(t)\} $. We further assume that $\chi_i(t,y)\equiv 1$ for $y\in B_{r_0/4}(y_i(t))\cap [-1,1]$. As a result of this choice and the condition \eqref{Taylorcnstr}, there exists a positive constant $c$, which depends on the shear flow $V$, such that 
\begin{align}\label{non_deg_Tay}
|\pa_y V(t,y)|\geq c>0,\quad \forall y\in \text{support} \{\pa_y \chi_i(t,\cdot)\}. 
\end{align}
We apply the Poincar\'e inequality and the observation \eqref{non_deg_Tay} to obtain that for $i\in \{1,\cdots, N(t)\}$, 
\begin{align}
\|f \chi_i\|_{L^2([-1,1])}\leq& C\|\pa_y (f\chi_i)\|_{L^2([-1,1])}\leq C\|\pa_y f\|_{L^2([-1,1])}+C\|f \pa_y \chi_i(t,\cdot)\|_{L^2([-1,1])}\\
\leq& C\|\pa_y f\|_{L^2([-1,1])}+C\|f\pa_y V(t,\cdot)\|_{L^2([-1,1])}.
\end{align}
Moreover, we have that $\|f\chi_0\|_{L^2([-1,1])}\leq C\|f\pa_yV(t,\cdot)\|_{L^2([-1,1])}$ because the $\pa_y V$ is non-vanishing on the support of $\chi_0$. Through summing all the contributions, we obtain \eqref{Spec_Tay}.
\end{proof}

\bibliographystyle{abbrv} 
\bibliography{SimingBib,nonlocal_eqns,JacobBib}
\end{document}